\begin{document}

\title{On cubic bridgeless graphs whose edge-set cannot be covered by
  four perfect matchings}

\author{L. Esperet \footnote{Laboratoire G-SCOP (Grenoble-INP, CNRS),
    Grenoble, France. Partially supported by the French \emph{Agence
      Nationale de la Recherche} under reference
    \textsc{anr-10-jcjc-0204-01}.} \and
  G. Mazzuoccolo \footnote{Laboratoire G-SCOP (Grenoble-INP, CNRS),
    Grenoble, France. Research supported by a fellowship from the
    European Project ``INdAM fellowships in mathematics and/or
    applications for experienced researchers cofunded by Marie Curie
    actions'' e-mail: {\tt mazzuoccolo@unimore.it}}}

\maketitle

\newtheorem{theorem}{Theorem}
\newtheorem{lemma}[theorem]{Lemma}
\newtheorem{definition}[theorem]{Definition}
\newtheorem{example}[theorem]{Example}
\newtheorem{corollary}[theorem]{Corollary}
\newtheorem{conjecture}[theorem]{Conjecture}
\newtheorem{observation}[theorem]{Observation}

\newcommand{\Ha}{\mathring{H}}
\def\scc{\mathop{\mathrm{scc}}\nolimits}

\begin{abstract} \noindent 
The problem of establishing the number of perfect matchings necessary
to cover the edge-set of a cubic bridgeless graph is strictly related
to a famous conjecture of Berge and Fulkerson.  In this paper we prove
that deciding whether this number is at most 4 for a given cubic
bridgeless graph is NP-complete. We also construct an infinite family
$\cal F$ of snarks (cyclically 4-edge-connected cubic graphs of girth
at least five and chromatic index four) whose edge-set cannot be
covered by 4 perfect matchings. Only two such graphs were known. 

It turns out that the family $\cal F$ also has interesting properties
with respect to the shortest cycle cover problem.  The shortest cycle
cover of any cubic bridgeless graph with $m$ edges has length at least
$\tfrac43m$, and we show that this inequality is strict for graphs of
$\cal F$. We also construct the first known snark with no cycle cover
of length less than $\tfrac43m+2$.
\end{abstract}

\noindent \textit{Keywords: Berge-Fulkerson conjecture, perfect
  matchings, shortest cycle cover, cubic graphs, snarks.\\
 MSC(2010):05C15 (05C70)}

\section{Introduction}\label{sec:intro}

Throughout this paper, a graph $G$ always means a simple connected
finite graph (without loops and parallel edges). A \emph{perfect
  matching} of $G$ is a $1$-regular spanning subgraph of $G$. In this
context, a \textit{cover} of $G$ is a set of perfect matchings of
$G$ such that each edge of $G$ belongs to at least one of the perfect
matchings. Following the terminology introduced in \cite{BonCar}, the
\textit{excessive index} of $G$, denoted by $\chi'_e(G)$, is the least
integer $k$ such that the edge-set of $G$ can be covered by $k$
perfect matchings.

A famous conjecture of Berge and Fulkerson~\cite{Ful} states that the
edge-set of every cubic bridgeless graph can be covered by 6 perfect
matchings, such that each edge is covered precisely twice. The second
author recently proved that this conjecture is equivalent to another
conjecture of Berge stating that every cubic bridgeless graph has
excessive index at most five~\cite{Maz}.

Note that a cubic bridgeless graph has excessive index 3 if and only
if it is 3-edge-colorable, and deciding the latter is
NP-complete. H\"agglund~\cite[Problem 3]{Hag} asked if it is possible
to give a characterization of all cubic graphs with excessive index
5. In Section~\ref{sec:np}, we prove that the structure of cubic
bridgeless graphs with excessive index at least five is far from
trivial. More precisely, we show that deciding whether a cubic
bridgeless graph has excessive index at most four (resp. equal to
four) is NP-complete.

\medskip

The gadgets used in the proof of NP-completeness have many
2-edge-cuts, so our first result does not say much about
3-edge-connected cubic graphs. A \emph{snark} is a non
$3$-edge-colorable cubic graph with girth (length of a shortest cycle)
at least five that is cyclically $4$-edge connected. A question raised
by Fouquet and Vanherpe~\cite{FouVan} is whether the Petersen graph is
the unique snark with excessive index at least five. This question was
answered by the negative by H\"agglund using a computer
program~\cite{Hag}. He proved that the smallest snark distinct from
the Petersen graph having excessive index at least five is a graph
$\Ha$ on 34 vertices (see Figure \ref{fig:Hagg}). In
Section~\ref{sec:cons} we show that the graph found by H\"agglund is a
special member of an infinite family $\cal F$ of snarks with excessive
index precisely five.

\medskip

In Section~\ref{sec:scc}, we study shortest cycle covers of the graphs
from our family $\cal F$. A \emph{cycle cover} of a graph $G$ is a
covering of the edge-set of $G$ by cycles (connected subgraphs with
all degrees even), such that each edge is in at least one cycle. The
\emph{length} of a cycle cover is the sum of the number of edges in
each cycle. The \emph{Shortest Cycle Cover Conjecture} of Alon and
Tarsi~\cite{AT85} states that every bridgeless graph $G$ has a cycle
cover of length at most $\tfrac75 |E(G)|$. This conjecture implies a
famous conjecture due to Seymour~\cite{Sey79} and
Szekeres~\cite{Sze73}, the \emph{Cycle Double Cover Conjecture}, which
asserts that every bridgeless graph has a cycle cover such that every
edge is covered precisely twice (see~\cite{JT92}). It turns out that
the Cycle Double Cover Conjecture also has interesting connections
with the excessive index of snarks. Indeed, it was proved
independently by Steffen~\cite{Ste} and Hou, Lai, and
Zhang~\cite{HouLaiZha} that it is enough to prove the Cycle Double
Cover conjecture for snarks with excessive index at least five.

\smallskip

The best known upper bound on the length of a cycle cover of a
bridgeless graph $G$, $\tfrac53 |E(G)|$, was obtained by Alon and
Tarsi~\cite{AT85} and Bermond, Jackson, and Jaeger~\cite{BJJ83}. For
cubic bridgeless graphs there is a trivial lower bound of
$\tfrac{4}{3} |E(G)|$, which is tight for 3-edge-colorable cubic
graphs.  Jackson~\cite{Jac94} proved an upper bound of $\tfrac{64}{39}
|E(G)|$, and Fan~\cite{Fan94} improved it to $\tfrac{44}{27}
|E(G)|$. The best known upper bound in the cubic case, $\tfrac{34}{21}
|E(G)|$, was obtained by Kaiser, Kr\'al', Lidick\'y, Nejedl\'y, and \v
S\'amal~\cite{KKLNS10} in 2010.

Brinkmann, Goedgebeur, H\"agglund, and Markstr\"om~\cite{BriGoeHAgMar}
proved using a computer program that the only snarks $G$ on $m$ edges
and at most 36 vertices having no cycle cover of length
$\tfrac{4}{3}m$ are the Petersen graph and the 34-vertex graph $\Ha$
mentioned above. Moreover, these two graphs have a cycle cover of
length $\tfrac{4}{3}m+1$. They also conjectured that every snark $G$
has a cycle cover of size at most $(\tfrac{4}{3}+o(1)) |E(G)|$.

In Section~\ref{sec:scc}, we show that all the graphs $G$ in our
infinite family $\cal F$ have shortest cycle cover of length more than
$\tfrac{4}{3} |E(G)|$. We also find the first known snark with no
cycle cover of length less than $\tfrac{4}{3} |E(G)|+2$ (it has 106
vertices).

\paragraph{Notation}

Let $X,Y$ be subsets of the vertex-set of $G$. We denote by $\partial
X$ the set of edges with precisely one end-vertex in $X$ and by
$\partial(X,Y)$ the set of edges with one end-vertex in $X$ and the
other in $Y$. If $X,Y$ form a partition of the vertex-set of $G$, the
set $\partial(X,Y)$ is called an \emph{edge-cut}, or a
\emph{$k$-edge-cut} if $k$ is the cardinality of $\partial(X,Y)$. It
is well-known that for any $k$-edge-cut $\partial(X,Y)$ and any
perfect matching $M$ of a cubic graph $G$, the numbers $|X|$, $|Y|$,
$k$, and $|M\cap \partial(X,Y)|$ have the same parity. This will be
used implicitly several times in the remaining of the paper.

\section{NP-completeness}\label{sec:np}

We denote by $T$ the \emph{Tietze graph}, that is the graph obtained
from the Petersen graph by replacing one vertex by a triangle (see
Figure~\ref{fig:tietze}).

\begin{figure}[ht]
\centering
\includegraphics[width=12cm]{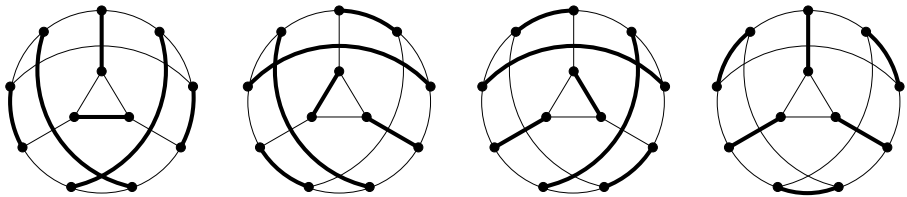}
\caption{Four perfect matchings covering the edge-set of the Tietze
  graph.}\label{fig:tietze}
\end{figure}

It is not difficult to check that the Petersen graph has excessive
index $5$ (see also~\cite{FouVan}), while the Tietze graph has
excessive index $4$ (see Figure~\ref{fig:tietze} for a cover
consisting of four perfect matchings). The next lemma describes how
four perfect matchings covering the edge-set of $T$ intersect the
unique triangle of $T$.

\begin{lemma}\label{lem:tietze}
For any cover $\mathcal M$ of $T$ with $|{\mathcal M}|=4$, each edge
of the unique triangle of $T$ is in precisely one perfect matching of
$\mathcal M$.
\end{lemma}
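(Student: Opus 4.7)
The plan is to exploit the $3$-edge-cut $\partial X$ separating the triangle $X=\{x_1,x_2,x_3\}$ of $T$ from the rest. Label the three edges of $\partial X$ by $e_1,e_2,e_3$ (with $e_i$ incident to $x_i$) and the three triangle edges by $t_{12}, t_{13}, t_{23}$. Since $|X|=3$, parity forces $|M\cap\partial X|\in\{1,3\}$ for every perfect matching $M$ of $T$. Accordingly, $M$ must be of one of two types: type $A$, containing all three outer edges and no triangle edge; or type $B_j$, containing the outer edge $e_j$ together with the opposite triangle edge $t_{kl}$ (where $\{j,k,l\}=\{1,2,3\}$). Letting $a$ and $b_j$ denote the number of matchings of $\mathcal M$ of each type, we have $a+b_1+b_2+b_3=4$; since the triangle edge $t_{kl}$ is covered exactly $b_j$ times, we need $b_j\ge 1$ for each $j$. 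It follows that $a\le 1$, and the case $a=1$ forces $b_1=b_2=b_3=1$, giving precisely the conclusion.

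The main difficulty is ruling out $a=0$, which after relabelling reduces to $b_1=2$, $b_2=b_3=1$. To handle this, I would analyse $H:=T-X$, which is the Petersen graph $P$ minus the replaced vertex $v$. Using the Kneser description of $P$, one checks that $H$ is obtained from a $6$-cycle $C$ (on the six non-neighbours of $v$) by attaching three new vertices $y_1, y_2, y_3$ (the neighbours of $v$ in $P$), each joined by two edges to an antipodal pair of vertices of $C$. A routine enumeration then shows that, for each $j$, there are exactly two near-perfect matchings of $H$ leaving $y_j$ unmatched, and jointly they use exactly the four edges of $C$ incident to $y_j$'s attachments, leaving the two opposite $C$-edges unused.

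Applying this observation with $j=1$: the two type-$B_1$ matchings in $\mathcal M$ together cover every edge of $H$ outside a set $S$ of four edges, namely $S=\{y_1c, y_1c', f_1, f_2\}$, where $c,c'$ are the two attachments of $y_1$ on $C$ and $f_1,f_2$ are the two $C$-edges opposite to $y_1$. A short check shows that each near-perfect matching of $H$ missing $y_2$ or $y_3$ meets $S$ in exactly one edge from $\{y_1c, y_1c'\}$ and one edge from $\{f_1, f_2\}$; moreover, up to labelling, the two type-$B_2$ restrictions realise the pairs $(y_1c, f_1)$ and $(y_1c', f_2)$, while the two type-$B_3$ restrictions realise the complementary pairs $(y_1c, f_2)$ and $(y_1c', f_1)$. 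Any choice of one type-$B_2$ and one type-$B_3$ matching therefore shares either its $y_1$-coordinate or its $C$-edge coordinate, so at least one of the four edges of $S$ remains uncovered. This contradicts $\mathcal M$ being a cover of $T$, rules out $a=0$, and completes the proof.
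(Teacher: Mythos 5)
Your proof is correct, but it takes a genuinely different route from the paper's. The paper disposes of the hard case (your $a=0$, i.e.\ every matching meets the cut $\partial X$ in exactly one edge) in one line: contract the triangle, obtaining four perfect matchings covering the Petersen graph, which contradicts the fact, stated just before the lemma, that the Petersen graph has excessive index $5$. You instead rule out $a=0$ by an explicit structural analysis of $T-X$, i.e.\ the Petersen graph minus a vertex (a $6$-cycle with three vertices attached to antipodal pairs), enumerating the near-perfect matchings avoiding each $y_j$ and showing that the four edges left uncovered by the two type-$B_1$ matchings cannot all be picked up by one type-$B_2$ and one type-$B_3$ matching. I checked your enumerations: there are indeed exactly two near-perfect matchings of $T-X$ missing $y_1$, jointly covering all of $H$ except the two edges at $y_1$ and the two opposite $C$-edges, and the $S$-intersection pattern of the $B_2$/$B_3$ restrictions is as you state, so the parity of the "coordinates" forces an uncovered edge. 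What each approach buys: the paper's argument is three lines but leans on the external fact $\chi'_e(P)=5$; yours is longer and relies on routine (but real) case checks, yet it is self-contained --- in effect it re-proves, localized at one vertex, that the Petersen graph cannot be covered by four perfect matchings. One small point you should make explicit: the two type-$B_1$ matchings have distinct restrictions to $H$ (immediate, since a type-$B_1$ matching of $T$ is determined by its restriction and $\mathcal M$ consists of four distinct matchings), which is what lets you assert that together they realise both near-perfect matchings missing $y_1$ and hence cover everything outside $S$.
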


\begin{proof}
By parity, each perfect matching of $\mathcal M$ contains one or three
edges incident to the unique triangle of $T$.  If a perfect matching
of $\mathcal M$ contains the three edges incident to $T$, then each of the
three other perfect matchings of $\mathcal M$ must contain a distinct
edge of $T$, hence the lemma holds. On the other hand, suppose for the
sake of contradiction that each perfect matching of $\mathcal M$
contains precisely one edge incident to the triangle. Then by
contracting the triangle we obtain four perfect matchings covering the
Petersen graph, a contradiction.
\end{proof}

Before we prove the main result of this section, we need to describe
an operation on cubic graphs. Given two cubic graphs $G$ and $H$ and
two edges $xy$ in $G$ and $uv$ in $H$, the
\emph{gluing}\footnote{This operation is sometimes called the
  \emph{2-cut-connection}.} of $(G,x,y)$ and $(H,u,v)$ is the graph
obtained from $G$ and $H$ by removing edges $xy$ and $uv$, and
connecting $x$ and $u$ by an edge, and $y$ and $v$ by an edge. Note
that if $G$ and $H$ are cubic and bridgeless, the resulting graph is
also cubic and bridgeless.

When the order of each pair $(x,y)$ and $(u,v)$ is not important, we
simply say that \emph{we glue the edge $uv$ of $H$ on the edge $xy$ of $G$}. 
We obtain either the gluing of $(G,x,y)$ and $(H,u,v)$
or the gluing of $(G,y,x)$ and $(H,u,v)$, but which one does not
matter.

\smallskip

We now study the complexity of bounding the excessive index of a cubic
bridgeless graph. A cubic bridgeless graph has excessive index three
if and only if it is 3-edge-colorable, and determining the latter is a
well-known NP-complete problem (see \cite{Hol}). We now prove that
determining whether the excessive index is at most 4 (or equal to 4)
is also hard.

\begin{theorem}\label{th:NPcomplete}
Determining whether a cubic bridgeless graph $G$ satisfies
$\chi'_e(G) \leq 4$ (resp. $\chi'_e(G)=4$) is an NP-complete problem.
\end{theorem}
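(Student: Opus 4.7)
To establish NP-completeness my plan is to reduce from 3-edge-colorability of cubic bridgeless graphs, which is NP-complete by Holyer's theorem~\cite{Hol}. Membership in NP for the ``$\chi'_e\le 4$'' variant is immediate, since four perfect matchings form a polynomial-size witness. The reduction below produces instances $G'$ each containing a Tietze copy attached through a 2-edge-cut, so that $\chi'_e(G')\ge 4$ holds automatically on these instances; the ``$\chi'_e=4$'' version then coincides with the ``$\chi'_e\le 4$'' version on the image of the reduction, and NP-membership is inherited.

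Given an instance $G$, I would build $G'$ cubic bridgeless from $G$ by attaching copies of the Tietze graph through the gluing operation introduced just before the theorem, each gluing being performed along a \emph{triangle} edge of the Tietze copy. The correspondence between 4-covers of $G'$ and 3-edge-colorings of $G$ runs as follows. For each gluing along an edge $xy\in E(G)$ using Tietze triangle edge $uv$, the pair $\{xu,yv\}$ is a 2-edge-cut of $G'$, so by the parity remark at the end of Section~\ref{sec:intro} every perfect matching of $G'$ contains either both or neither of $xu,yv$; Lemma~\ref{lem:tietze} applied to the Tietze copy then forces the edge $xy$ of $G$ to be ``virtually'' covered by exactly one of the four matchings in any 4-cover of $G'$. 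Combining this with the elementary fact that any 4-cover of a cubic graph covers each edge once or twice and that the doubled edges form a perfect matching $M^\star$, I would extract from a 4-cover of $G'$ a partition of $E(G)$ into three perfect matchings, that is, a 3-edge-coloring of $G$. In the converse direction, a 3-edge-coloring of $G$ together with a suitable auxiliary fourth perfect matching gives four matchings of $G$ to be propagated through each Tietze gadget, using the Tietze 4-cover of Figure~\ref{fig:tietze} (and its obvious relabelings among the four matchings) to match the prescribed triangle assignment on each copy.

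The main obstacle, and the technical heart of the proof, is choosing \emph{which} edges of $G$ receive Tietze gadgets, because the two naive choices both fail. Gluing a Tietze gadget on \emph{every} edge of $G$ is too strong: Lemma~\ref{lem:tietze} then forces the four induced perfect matchings of $G$ to partition $E(G)$, which is impossible in a cubic graph for cardinality reasons ($4\cdot|V(G)|/2>|E(G)|$), so $\chi'_e(G')\ge 5$ holds vacuously for every $G$. Gluing only on the edges of a single fixed perfect matching of $G$ is too weak, because the induced condition (3-edge-coloring with that matching as a color class) can be decided in polynomial time by checking that the complementary 2-factor contains no odd cycle. The correct reduction must therefore glue Tietze gadgets on an intermediate, carefully tailored edge set, possibly through composite gadgets built from several Tietze copies joined together, chosen so that the single/doubled decomposition forced by Lemma~\ref{lem:tietze} is tight enough to collapse to a genuine 3-edge-coloring of $G$ yet loose enough that 3-edge-colorable $G$'s do admit a 4-cover of the resulting $G'$. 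Designing this gadget scheme and verifying both directions of the correspondence is the step I expect to be the main obstacle.
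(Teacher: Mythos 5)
Your proposal is a plan, not a proof, and you say so yourself: the step you flag as ``the main obstacle'' --- choosing which edges of $G$ receive the Tietze gadgets so that the forced once/twice pattern collapses to a 3-edge-coloring --- is exactly the content of the reduction, and it is missing. Your preliminary analysis is sound (gluing on every edge of $G$ is indeed impossible by the counting $4\cdot|V(G)|/2>|E(G)|$, and gluing on a single perfect matching gives a polynomially decidable condition), but without the actual gadget placement the argument does not get off the ground, and neither direction of the equivalence is verified.

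The idea you are missing is a vertex-to-triangle expansion. The paper first replaces every vertex of $G$ by a triangle, obtaining a cubic bridgeless graph $G'$ with $3n$ ``new'' (triangle) edges and the original edges of $G$, and then glues a copy of the Tietze graph, along an edge $uv$ of its unique triangle, onto \emph{each of the $3n$ new edges} --- not onto any subset of the original edges. In the resulting graph $H$, any cover by four perfect matchings induces, through the 2-edge-cuts and Lemma~\ref{lem:tietze}, four perfect matchings of $G'$ in which every new edge is covered exactly once; since each vertex of $G'$ meets two new edges and one original edge, every original edge is covered exactly twice. The key final step is then a local argument around each triangle: colouring an original edge $i$ when it is covered by $\{M'_4,M'_i\}$ or by $\{M'_j,M'_k\}$ with $\{j,k\}=\{1,2,3\}\setminus\{i\}$, one checks that no two original edges of the same colour share a triangle, so each colour class is a perfect matching of $G$. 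Conversely, a 3-edge-colouring of $G$ extends to three perfect matchings of $G'$ covering all edges, which are completed inside each Tietze copy by the three leftmost matchings of Figure~\ref{fig:tietze}, and a fourth matching (the original edges of $G$ together with the rightmost matching of Figure~\ref{fig:tietze} in each copy) finishes the cover; non-3-edge-colourability of $H$ (forced by the Tietze copies across 2-cuts, as you note) then gives $\chi'_e(H)=4$ exactly when $G$ is 3-edge-colourable. Your intuition that an ``intermediate, carefully tailored edge set'' is needed was right, but the tailoring is done by modifying $G$ itself (truncation into triangles), which is the step your proposal does not supply.
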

\begin{proof}
The proof proceeds by reduction to the {\sc 3-edge-colorability} of
cubic bridgeless graphs, which is NP-complete~\cite{Hol}. Our problem
is certainly in NP, since a cover of $G$ consisting of four perfect
matchings gives a certificate that can be checked in polynomial time.

Now, let $G$ be a cubic bridgeless graph, and let $n$ be its number of
vertices. Let $G'$ be the graph obtained from $G$ by replacing each
vertex by a triangle. Note that $G'$ is a cubic bridgeless graph with
$3n$ vertices and $n$ (vertex-disjoint) triangles. The $3n$ edges of
$G'$ contained in the $n$ triangles are called the \emph{new edges},
while the other are called the \emph{original edges}. Let $uv$ be an
edge of the unique triangle of the Tietze graph $T$, and let $H$ be
the graph obtained from $G'$ and $3n$ copies of $T$ indexed by the
$3n$ new edges of $G'$, by gluing each new edge $e$ of $G$ on the
edge $uv$ of the copy $T_e$ of $T$. Note that $H$ is a cubic
bridgeless graph with $39n$ vertices. We first remark that if $H$ can
be covered by $k$ perfect matchings, then $T$ can also be covered by
$k$ perfect matchings since for any copy $T_e$, any perfect matching
of $H$ contains 0 or 2 edges connecting $T_e$ to $G$ (and thus
corresponds to a perfect matching of $T$). It follows that $H$ is not
3-edge-colorable.

\smallskip

We now prove that $G$ is 3-edge-colorable if and only if $\chi'_e(H)=
4$ (which is equivalent to $\chi'_e(H)\leq 4$ by the previous remark).

Suppose first that $G$ is 3-edge-colorable, and consider three perfect
matchings $M_1,M_2,M_3$ covering the edges of $G$. Each perfect
matching $M_i$, $i \in \{1,2,3\}$, can be naturally extended to a
perfect matching $M_i'$ of $G'$ such that $\{M_1',M_2',M_3'\}$ covers
the edges of $G'$. These three perfect matchings can be combined with
the three leftmost perfect matchings of Figure~\ref{fig:tietze}, for
each copy $T_e$ glued with $G'$, to obtain three perfect matchings of
$H$ covering all the edges of $H$ except three in each copy of $T$. We
add a fourth perfect matching of $H$, obtained by combining the
perfect matching of $G'$ consisting of the original edges of $G$, with
the rightmost perfect matching of Figure~\ref{fig:tietze} in each copy
of $T$. Since all the edges of $H$ are covered by these four perfect
matchings of $H$, we have $\chi'_e(H)\leq 4$, and since $H$ is not
3-edge-colorable, $\chi'_e(H)= 4$.

Suppose now that $\chi'_e(H)= 4$, and let $M_1,M_2,M_3,M_4$ be perfect
matchings covering $H$. By Lemma~\ref{lem:tietze}, these perfect
matchings correspond to four perfect matchings $M'_1,M'_2,M'_3,M'_4$
of $G'$ covering the edges of $G'$, and such that each new edge of
$G'$ is covered once. Since every vertex of $G'$ is incident to two
new edges and one original edge, it follows that each original edge is
covered twice. For all $i \in \{1,2,3\}$, color the original edges with color $i$
if they are covered by $M'_4$ and $M'_i$, or by $M'_j$ and $M'_k$ with
$\{j,k\}=\{1,2,3\}\setminus \{i\}$. Assume that some original edge
$xy$ is covered both by $M'_4$ and $M'_1$. Then the two new edges
incident to $x$ are covered by $M'_2$ and $M'_3$, respectively, and
the last edge of the new triangle containing $x$ is covered by one of
$M'_4$ and $M'_1$. It follows that neither of the two edges incident
to this triangle and distinct from $xy$ is covered by $M'_4$ and
$M'_1$, or by $M'_2$ and $M'_3$. By symmetry, it follows that each
color class corresponds to a perfect matching of $G$, and so $G$ is
3-edge-colorable.
\end{proof}

\section{An infinite family of snarks with excessive index 5}\label{sec:cons}

Recall that a \emph{snark} is a cubic graph that is cyclically
4-edge-connected, has girth at least five, and is not
3-edge-colorable. In this section we show how to construct a snark $G$
with excessive index at least 5 from three snarks $G_0,G_1,G_2$ each
having excessive index at least 5. Taking $G_0=G_1=G_2$ to be the
Petersen graph, we obtain the graph $\Ha$ found by H\"agglund using a
computer program~\cite{Hag}, and for which no combinatorial proof
showing that its excessive index is 5 has been known up to now.  Our
proof holds for any graph obtained using this construction, thus we
exhibit an infinite family of snarks with excessive index 5. This
answers the question of Fouquet and Vanherpe~\cite{FouVan} about the
existence of snarks distinct from the Petersen graph having excessive
index 5 in a very strong sense.

\medskip

\noindent\textbf{The windmill construction}\footnote{This construction should not be confused with \emph{windmill graphs}, graphs obtained from disjoint cliques by adding a universal vertex.}\hspace{6pt}
 For $i \in \{0,1,2\}$, consider a snark $G_i$ with an edge
 $x_iy_i$. Let $x_i^0$ and $x_i^1$ (resp. $y_i^0$ and $y_i^1$) be the
 neighbors of $x_i$ (resp. $y_i$) in $G_i$. For $i\in \{0,1,2\}$, let
 $H_i$ be the graph obtained from $G_i$ by removing vertices $x_i$ and
 $y_i$. We construct a new graph $G$ from the disjoint union of $H_0$,
 $H_1$, $H_2$ and a new vertex $u$ as follows. For $i\in \{0,1,2\}$,
 we introduce a set $A_i=\{a_i,b_i,c_i\}$ of vertices such that $a_i$
 is adjacent to $x_{i+1}^0$ and $y_{i-1}^0$, $b_i$ is adjacent to
 $x_{i+1}^1$ and $y_{i-1}^1$, and $c_i$ is adjacent to $a_i$, $b_i$
 and $u$ (here and in the following all indices $i$ are taken modulo
 $3$).

\smallskip

The windmill construction is depicted in Figure~\ref{fig:mainconstruction}.

\begin{figure}[ht]
\centering
\includegraphics[width=10cm]{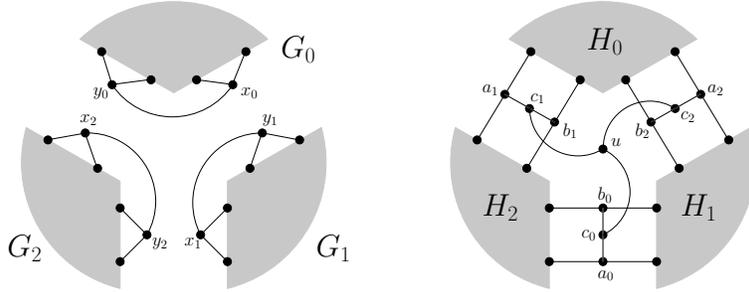}
\caption{The windmill construction of $G$ (right) from $G_0$, $G_1$, and
  $G_2$.}\label{fig:mainconstruction}
\end{figure}

\begin{lemma}\label{lem:snark}
If $G_0,G_1,G_2$ are snarks, any graph $G$ obtained from $G_0,G_1,G_2$
by the windmill construction is cubic, cyclically 4-edge-connected, and
has girth at least 5.
\end{lemma}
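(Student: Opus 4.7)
The plan is to verify the three required properties of $G$ in turn. For the cubic property, I would count degrees. Since each $G_i$ has girth at least $5$, the edge $x_iy_i$ lies in no triangle or $4$-cycle, so $x_i^0, x_i^1, y_i^0, y_i^1$ are four distinct vertices of $V(H_i)$; each loses exactly one edge (to $x_i$ or $y_i$) and gains exactly one new edge (to some $a_j$ or $b_j$), retaining degree $3$. All other vertices of $H_i$ keep their three edges from $G_i$, and $u$, $a_i$, $b_i$, $c_i$ have degree $3$ by construction.

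For the girth, let $C$ be a cycle of $G$. If $V(C)\subseteq V(H_i)$ for some $i$, then $C$ is a cycle of $G_i$, so $|C|\geq 5$. Otherwise $C$ visits the hub $\mathcal{H}=\{u\}\cup\bigcup_i\{a_i, b_i, c_i\}$, which has $10$ vertices and $9$ internal edges and induces a tree. Hence $C$ decomposes into ``hub paths'' alternating with ``external paths'' in the various $H_j$, linked by bridging edges. Any hub path between two bridging vertices has length at least $2$ (shortest example: $a_i - c_i - b_i$); and any external path in $H_j$ joins two of $x_j^0, x_j^1, y_j^0, y_j^1$, which are pairwise non-adjacent in $H_j$ (otherwise one obtains a triangle or $4$-cycle in $G_j$ through $x_jy_j$), so the external path has length at least $2$. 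Adding the two bridging edges, $|C|\geq 6$.

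For cyclic $4$-edge-connectivity, I would suppose for contradiction that $\partial X$ is a cyclic edge cut of $G$ with $|\partial X|\leq 3$, and set $Y=V(G)\setminus X$. First, if every $V(H_i)$ lies entirely on one side, the case where all three are on the same side puts the other side inside $\mathcal{H}$, which is a tree, contradicting cyclicity. Otherwise, by symmetry say $V(H_0)\subseteq X$ and $V(H_1), V(H_2)\subseteq Y$; writing $A_i=|\{a_i,b_i\}\cap Y|$ and using the explicit adjacencies $a_i\sim x_{i+1}^0, y_{i-1}^0$ and $b_i\sim x_{i+1}^1, y_{i-1}^1$, a direct count of bridging edges across $\partial X$ gives $|\partial X|\geq 8-2A_0\geq 4$, a contradiction. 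Second, if some $V(H_0)$ is split, let $F_0=\partial X\cap E(H_0)$. Reinserting $x_0, y_0$ into $H_0$ on the sides that minimize the resulting cut, and using that $\{x_0, y_0\}$ send four edges into $V(H_0)$, yields an edge cut of $G_0$ of size at most $|F_0|+2$; a parity argument (the four boundary edges split as $p$ and $4-p$ with $\min(p, 4-p)\leq 2$) combined with $|F_0|\leq 3$ sharpens this to a cut of $G_0$ of size at most $3$, contradicting the cyclic $4$-edge-connectivity of $G_0$.

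The main obstacle in this second case is to guarantee \emph{cyclicity} of the induced cut in $G_0$: the cycle witnessing cyclicity of $\partial X$ on a given side of $G$ may live in the hub or in $H_1$ or $H_2$ rather than in $H_0$, so it does not directly survive the reduction to $G_0$. The resolution is to exploit that with at most three cut edges in total and at least one of them already inside $H_0$, at least one of $V(H_1), V(H_2)$ must lie entirely on one fixed side of $\partial X$; the cycles it contains then provide the cycle on the corresponding side after the reduction. The careful bookkeeping of how the $\leq 3$ cut edges distribute among internal $H_0$ edges, bridging edges, and hub edges is what makes this second case the main work of the proof.
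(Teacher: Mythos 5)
Your handling of the cubic condition, the girth, and the case where no $V(H_i)$ is split is essentially sound (in particular the count $|\partial X|\ge 8-2A_0\ge 4$ is correct), but the case you yourself identify as the main work --- a cut $\partial X$ of $G$ with $|\partial X|\le 3$ splitting $V(H_0)$ --- has two genuine gaps. First, the size bound: placing $x_0,y_0$ optimally yields a cut of $G_0$ of size $|F_0|+\min(p,4-p)$, where $p$ is the number of the four attachment vertices $x_0^0,x_0^1,y_0^0,y_0^1$ lying in $X$; your claim that this is at most $3$ ``combined with $|F_0|\le 3$'' does not follow, since $|F_0|+\min(p,4-p)$ can a priori be as large as $5$. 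What one actually needs is that the edges of $\partial X$ outside $E(H_0)$ number at least $\min(p,4-p)$, so that $|F_0|+\min(p,4-p)\le|\partial X|\le 3$; this is true, but proving it requires analysing how the four bridging edges of $H_0$, the hub tree and $H_1,H_2$ interact (for instance, when $p=2$ one must rule out that only one crossing edge lies outside $E(H_0)$, which uses that the hub induces a tree and that $H_1,H_2$ each attach to it by four edges), and none of that is in your sketch. Second, and more seriously, the cyclicity transfer is a non sequitur: a cycle contained in $H_1$ or $H_2$ is not a subgraph of $G_0$, so the fact that some $V(H_j)$ with $j\ne 0$ lies entirely on one side of $\partial X$ cannot ``provide the cycle on the corresponding side after the reduction'' to a cut of $G_0$.

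The standard way around this --- and the one the paper uses when it asserts that a non-trivial edge-cut of size at most $3$ in a cubic graph is cyclic --- is that cyclicity need not be transferred at all: in a cubic graph, a set $Z$ inducing a forest satisfies $|\partial Z|\ge |Z|+2$, so any edge-cut of size at most $3$ both of whose sides contain at least two vertices is automatically cyclic. Thus, once a $G_0$-cut of size at most $3$ separating $X_0\cup\{x_0,y_0\}$ from $Y_0$ (both nonempty) is in hand, the only remaining issue is the degenerate case $|Y_0|=1$, which must be excluded by a short direct argument back in $G$ (for example, if the unique vertex of $Y_0$ is not an attachment vertex, its three edges already exhaust $\partial X$ and its side of the $G$-cut is acyclic). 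As written, your Case B establishes neither the bound of $3$ nor the cyclicity of the induced cut, so the key step is missing; note also, as a minor slip, that in your girth argument a ``hub path'' may consist of the single vertex $a_i$ or $b_i$ (its two incident bridging edges going to different $H_j$'s), and the bound $|C|\ge 5$ survives only after observing that this forces at least two external paths. The paper itself proceeds differently: it observes that $G\setminus(V(H_i)\cup V(H_{i+1})\cup A_{i-1})$ is a subdivision of $G_{i-1}$, transfers any cyclic $\le 3$-edge-cut of $G$ to a non-trivial cut of some $G_i$, and then invokes the forest-counting fact above; your more hands-on route can be repaired, but only by supplying exactly the two missing arguments described here.
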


\begin{proof}
It is trivial to check that $G$ is cubic, bridgeless, and has girth at
least five (we do not introduce cycles of length three or four, and
the distance between any two vertices of any $H_i$ is at least as
large in $G$ as in $G_i$). Note that for any $i\in \{0,1,2\}$, the graph $G
\setminus (V(H_i)\cup V(H_{i+1}) \cup A_{i-1})$ is a subdivision of
$G_{i-1}$. It follows that for any cyclic $k$-edge-cut in $G$, we can find
a non-trivial edge-cut of cardinality at most $k$ in some $G_i$. If
$k\leq 3$ this non-trivial edge-cut is cyclic. Since all $G_i$'s are
cyclically 4-edge-connected, $G$ is also cyclically 4-edge-connected.
\end{proof}

Before we prove that $G$ has excessive index at least five, we need a
couple of definitions. By a slight abuse of notation, we denote the
set of edges of $G$ with exactly one end-vertex in $V(H_i)$ by
$\partial H_i$ (instead of $\partial (V(H_i))$). In what follows it
will be useful to consider how each perfect matching $M$ of $G$ intersects
$\partial H_i$. Since $|\partial H_i|=4$, by parity we have that
$|M\cap \partial H_i|$ is even. Observe that $|M\cap \partial H_i|\ne
4$, since otherwise $u$ would be adjacent to both $c_{i-1}$ and
$c_{i+1}$ in $M$. If $|M\cap \partial H_i|=0$, we say that $M$ is of
\textit{type $0$} on $H_i$. If $|M\cap \partial H_i|=2$ we consider
two cases: we say that $M$ is of \textit{type $1$ } on $H_i$ if
$|M\cap \partial(H_i,A_{i-1})|=|M\cap \partial(H_i,A_{i+1})|=1$, while
$M$ is of \textit{type $2$} on $H_i$ otherwise (in this case one of
the two sets of edges $M\cap \partial(H_i,A_{i-1})$, $M\cap
\partial(H_i,A_{i+1})$ has cardinality $2$ and the other is empty). A
perfect matching $M$ of $G$ of type $2$ on $H_0$, type $1$ on $H_1$
and type $0$ on $H_2$ is depicted in Figure~\ref{fig:matching_type}
(left).

\begin{figure}[ht]
\centering
\includegraphics[width=10cm]{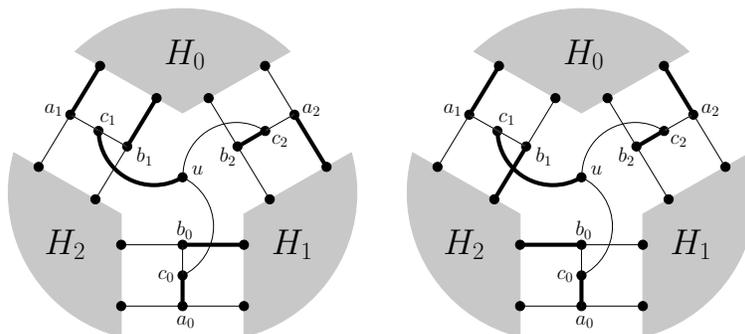}
\caption{The two kinds of perfect matching of $G$ (up to
  symmetry).}\label{fig:matching_type}
\end{figure}

Observe first that each perfect matching of type 0 on $H_i$
corresponds to a perfect matching of $G_i$ containing $x_iy_i$, while
each perfect matching of type 1 on $H_i$ corresponds to a perfect
matching of $G_i$ avoiding $x_iy_i$. This observation has the
following immediate consequence:

\begin{lemma}\label{lem:cov}
Assume that for some $i$ and $k$, $G$ can be covered by $k$ perfect matchings,
each of type 0 or 1 (and not all of type 1) on $H_i$. Then $G_i$ can
be covered by $k$ perfect matchings.
\end{lemma}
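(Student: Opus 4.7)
The plan is to use the observation stated just before the lemma, which establishes a bijective correspondence between perfect matchings of $G$ of type $0$ (resp.\ type $1$) on $H_i$ and perfect matchings of $G_i$ containing (resp.\ avoiding) the edge $x_iy_i$. Concretely, given a perfect matching $M$ of $G$ that is of type $0$ or $1$ on $H_i$, I would define the corresponding matching $M'$ of $G_i$ as follows: $M'$ contains every edge of $M$ that lies in $E(H_i)\subseteq E(G_i)$; if $M$ is of type $0$, add the edge $x_iy_i$; if $M$ is of type $1$, then exactly one of the two edges $x_i^0a_{i-1}, x_i^1b_{i-1}$ belongs to $M$, say the one incident to $x_i^\epsilon$, and similarly exactly one of $y_i^0a_{i+1}, y_i^1b_{i+1}$ belongs to $M$, say the one incident to $y_i^\delta$; then add the edges $x_ix_i^\epsilon$ and $y_iy_i^\delta$ to $M'$. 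A straightforward degree check confirms that $M'$ is indeed a perfect matching of $G_i$.

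Now let $\mathcal M=\{M_1,\dots,M_k\}$ be the given cover of $G$, and let $\mathcal M'=\{M_1',\dots,M_k'\}$ be the family of matchings of $G_i$ constructed above. I would verify that $\mathcal M'$ covers $E(G_i)$ by splitting the edges into three groups. First, any edge $e\in E(H_i)$ is covered by some $M_j\in\mathcal M$, and by construction $e\in M_j'$. Second, each of the four edges $x_ix_i^0, x_ix_i^1, y_iy_i^0, y_iy_i^1$ of $G_i$ corresponds to one of the four edges of $\partial H_i$ in $G$; that edge is covered by some $M_j\in\mathcal M$, and since type $0$ matchings avoid $\partial H_i$ entirely while type $2$ is excluded by hypothesis, $M_j$ must be of type $1$, so by construction the associated edge of $G_i$ lies in $M_j'$. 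Third, since by hypothesis not all matchings in $\mathcal M$ are of type $1$ on $H_i$, at least one $M_j$ is of type $0$, and then $x_iy_i\in M_j'$.

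There is no real obstacle here beyond careful bookkeeping: the content of the lemma is essentially that types $0$ and $1$ on $H_i$ are precisely the ``legal'' types that can be pulled back to perfect matchings of $G_i$, and that the only edge of $G_i$ which is not automatically covered by the restriction argument is $x_iy_i$, whose coverage is guaranteed by the ``not all of type $1$'' clause. The main thing to get right is the correspondence between the edges of $\partial H_i$ in $G$ and the four edges of $G_i$ incident to $\{x_i,y_i\}$ other than $x_iy_i$, which is forced by the identification $x_i^0\leftrightarrow a_{i-1}$, $x_i^1\leftrightarrow b_{i-1}$, $y_i^0\leftrightarrow a_{i+1}$, $y_i^1\leftrightarrow b_{i+1}$ coming from the windmill construction.
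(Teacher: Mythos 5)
Your proof is correct and follows exactly the route the paper takes: the paper states the correspondence between type~0 (resp.\ type~1) matchings of $G$ on $H_i$ and perfect matchings of $G_i$ containing (resp.\ avoiding) $x_iy_i$ just before the lemma and then treats the lemma as an immediate consequence, which is precisely the correspondence you make explicit. Your additional bookkeeping (restriction to $E(H_i)$, the identification of the four edges of $\partial H_i$ with the edges of $G_i$ at $x_i,y_i$, and the use of the ``not all of type 1'' clause to cover $x_iy_i$) is accurate and fills in exactly the details the paper leaves implicit.
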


Note that a perfect matching $M$ of $G$ contains two edges of
$\partial(H_{i-1}\cup H_{i+1},A_{i})$ if $uc_i$ is an edge of $M$, and
$M$ contains one edge of $\partial(H_{i-1}\cup H_{i+1},A_{i})$
otherwise. It follows that up to symmetry (including a possible
permutation of the $H_i$'s), there are only two kinds of perfect
matchings of $G$: perfect matching of types 0, 1, and 2 (see
Figure~\ref{fig:matching_type}, left), and perfect matching of types
0, 1, and 1 (see Figure~\ref{fig:matching_type}, right). In
particular, we have the following property:

\begin{observation}\label{obs:types1}
Every perfect matching of $G$ is of type 2 on at
most one $H_i$, type 0 on exactly one $H_i$, and type 1 on at least
one $H_i$.
\end{observation}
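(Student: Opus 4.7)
The plan is a short case analysis driven by the behavior of $M$ at the central vertex $u$. Since $u$ has exactly three neighbors $c_0,c_1,c_2$, precisely one edge $uc_i$ belongs to $M$, and by the threefold symmetry of the windmill construction I may assume $uc_0\in M$.

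I will first count, for each $i\in\{0,1,2\}$, the number of edges of $M$ crossing from $A_i$ into $H_{i-1}\cup H_{i+1}$. When $uc_i\in M$, both $a_i$ and $b_i$ are forced to be matched externally, contributing~$2$; when $uc_i\notin M$, one of $a_i,b_i$ is matched with $c_i$ and the other externally, contributing~$1$. Summing these contributions and regrouping by $H_j$ gives $\sum_j|M\cap\partial H_j|=4$. The paper has already noted that each $|M\cap\partial H_j|$ is even and not equal to~$4$, so $|M\cap\partial H_j|\in\{0,2\}$; combined with the total~$4$, this forces one $H_j$ to satisfy $|M\cap\partial H_j|=0$ and the other two to satisfy $|M\cap\partial H_j|=2$, establishing the ``type $0$ on exactly one $H_i$'' portion of the statement.

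To finish, I will analyze how $A_0$'s two external edges distribute between $H_1$ and $H_2$. In the $(1,1)$ subcase, the parity of $|M\cap\partial H_1|$ and of $|M\cap\partial H_2|$ forces $A_1$'s unique external edge to land in $H_2$ and $A_2$'s in $H_1$, producing types $(0,1,1)$ on $(H_0,H_1,H_2)$. In the $(2,0)$ subcase (with both edges from $A_0$ into $H_1$), the same parity check forces $A_1$ and $A_2$ to send their external edges into $H_0$, producing types $(1,2,0)$; the $(0,2)$ subcase is symmetric. In every admissible configuration at most one $H_i$ is of type~$2$ and at least one is of type~$1$, as required. The only real obstacle is careful bookkeeping of indices modulo~$3$, which the cyclic symmetry of the construction keeps manageable.
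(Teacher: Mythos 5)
Your proof is correct and follows essentially the same route as the paper: the key step in both is that $A_i$ sends two matching edges into $H_{i-1}\cup H_{i+1}$ if $uc_i\in M$ and one otherwise, so the total crossing into the $H_j$'s is $4$, which together with $|M\cap\partial H_j|\in\{0,2\}$ pins down the types. Your explicit parity case analysis of how $A_0$'s two edges split is just the detailed version of what the paper dismisses with ``up to symmetry'' and Figure~\ref{fig:matching_type}.
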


We are now ready to prove our main theorem.

\begin{theorem}\label{th:main}
Let $G_0,G_1,G_2$ be snarks such that $\chi'_e(G_i)\geq 5$ for
$i\in \{0,1,2\}$. Then any graph $G$ obtained from $G_0,G_1,G_2$ by the
windmill construction is a snark with $\chi'_e(G)\geq 5$. 
\end{theorem}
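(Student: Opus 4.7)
The plan is to note first that, by Lemma~\ref{lem:snark}, $G$ is cubic, cyclically $4$-edge-connected, and has girth at least $5$; hence it suffices to prove $\chi'_e(G) \geq 5$, since this already forces $G$ to be non-$3$-edge-colorable and therefore a snark. I would argue by contradiction: suppose $\mathcal M = \{M_1, M_2, M_3, M_4\}$ covers $G$. Each $M_j$ contains exactly one edge at $u$, so there is a unique $f(M_j) \in \{0,1,2\}$ with $uc_{f(M_j)} \in M_j$. A short case check of how $A_{f(M_j)}$ distributes its two external edges between $H_{f-1}$ and $H_{f+1}$, together with the single external edges of $A_{f(M_j)\pm 1}$, shows that $M_j$ falls into one of three patterns, which I call Z, R, L, inducing on $(H_{f-1}, H_f, H_{f+1})$ the type profiles $(1,0,1)$, $(2,1,0)$, $(0,1,2)$ respectively. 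Writing $z_k$, $r_k$, $l_k$ for the number of matchings of pattern Z, R, L (respectively) with $f = k$, one obtains the explicit expressions $t_0(i) = z_i + r_{i-1} + l_{i+1}$ and $t_2(i) = r_{i+1} + l_{i-1}$, where $t_0(i)$ and $t_2(i)$ count the matchings of type $0$ and type $2$ on $H_i$.

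If some index $i$ satisfies $t_2(i) = 0$ and $t_0(i) \geq 1$, Lemma~\ref{lem:cov} yields a cover of $G_i$ by four perfect matchings, contradicting $\chi'_e(G_i) \geq 5$. So the next step is to rule out the complementary case, in which for every $i$ either $t_2(i) \geq 1$ or $t_0(i) = 0$. The identity $\sum_i t_0(i) = 4$, together with the observation that any $i$ with $t_0(i) = 4$ automatically forces $t_2(i) = 0$, implies that $t_0(i) > 0$ for at least two values of $i$. Moreover, covering $uc_i$ demands $m_i := z_i + r_i + l_i \geq 1$, and covering both $a_ic_i$ and $b_ic_i$ requires at least two matchings of $\mathcal M$ with $f \neq i$, so $m_i \leq 2$; combined with $\sum_i m_i = 4$, the triple $(m_0, m_1, m_2)$ must be a permutation of $(2,1,1)$.

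To finish, I would count the multiplicity with which the two edges of each cut $\partial(A_i, H_{i+1})$ and $\partial(A_i, H_{i-1})$ are used by $\mathcal M$. Direct inspection of the three patterns yields
\[
  d^+_i = 2l_i + z_i + z_{i-1} + r_{i+1} + l_{i+1}, \qquad d^-_i = 2r_i + z_i + z_{i+1} + r_{i-1} + l_{i-1},
\]
so full coverage of each $2$-edge cut demands $d^+_i, d^-_i \geq 2$. The hard part will be the ensuing finite enumeration: for every choice of $(z_k, r_k, l_k)$ compatible with the constraints above (the permutation of the $m_i$'s, the implication $t_0(i) = 0 \Rightarrow z_i = r_{i-1} = l_{i+1} = 0$, and $t_2(i) \geq 1$ whenever $t_0(i) > 0$), I would check that some $d^+_i$ or $d^-_i$ drops below $2$, exhibiting an uncovered edge of the corresponding cut and contradicting that $\mathcal M$ covers~$G$.
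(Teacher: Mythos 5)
Your plan is correct in substance, but be aware that its decisive step is asserted rather than carried out: you claim that every pattern vector $(z_k,r_k,l_k)$ surviving the Lemma~\ref{lem:cov} test and the $(2,1,1)$ constraint on $(m_0,m_1,m_2)$ violates some $d^{\pm}_i\geq 2$. I checked your set-up (the three patterns $Z,R,L$ and both displayed formulas for $d^{\pm}_i$ are right, since parity on each $\partial H_i$ determines the routing of the single external edges of $A_{f\pm1}$), and the enumeration does close: fixing $m_0=2$ by cyclic symmetry, the assignments that pass the $t_0/t_2$ test (for instance $Z,R$ at $f=0$ with $R$ at $f=1$ and $R$ at $f=2$; all four matchings of pattern $R$; $R,L$ at $f=0$ with $(Z,Z)$, $(Z,L)$, $(R,R)$ or $(R,L)$ at $f=1,2$; and mirror images) each leave some cut $\partial(A_i,H_{i\pm1})$ hit at most once, so your argument can be completed. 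Two caveats: $d^{\pm}_i\geq 2$ is only a necessary condition for coverage (both incidences could lie on the same edge of the cut), which is harmless since you use it solely to derive contradictions; and you should say explicitly that repetitions among $M_1,\dots,M_4$ are allowed, as the paper does, so that $\chi'_e(G)\leq 4$ really yields such a multiset. Compared with the paper, your route is genuinely more computational: the paper compresses your pattern analysis into Observation~\ref{obs:types1} (every perfect matching is of type $0$ on exactly one $H_i$ and of type $2$ on at most one) and then reaches the contradiction in a few lines --- by pigeonhole two matchings are of type $0$ on the same $H_0$; Lemma~\ref{lem:cov} forces the other two to be of type $2$ on $H_0$, hence to contain $uc_1$ and $uc_2$; covering $uc_0$ forces a matching of type $1$ on both $H_1$ and $H_2$, and Lemma~\ref{lem:cov} applied to $H_1$ and $H_2$ forces the remaining matching to be of type $2$ on both, contradicting Observation~\ref{obs:types1}. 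Your approach buys a uniform, mechanical criterion (coverage counts at $u$, at the $c_i$'s, and on the six $2$-cuts) at the cost of a finite case check of a few dozen pattern vectors; the paper's targeted deduction avoids enumeration entirely but depends on picking the right starting pigeonhole.
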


\begin{proof}  
The graph $G$ is cubic, cyclically 4-edge-connected, and has girth at
least 5, as proved in Lemma~\ref{lem:snark}. We now prove that $G$ has
excessive index at least five (which implies that $G$ is not
3-edge-colorable, and thus a snark).  For the sake of contradiction,
let $\mathcal M = \{M_1,M_2,M_3,M_4\}$ be a cover of $G$, where $M_1$
might be identical to $M_2$ (in which case $G$ is 3-edge-colorable).

By Observation~\ref{obs:types1}, each perfect matching $M_i$ is of
type $0$ on one $H_j$, so we can assume without loss of generality
that $M_1$ and $M_2$ are of type $0$ on $H_0$. By Lemma~\ref{lem:cov},
at least one of $M_3,M_4$ is of type 2 on $H_0$, since otherwise $G_0$
would have excessive index at most four. If $M_3,M_4$ do not have the
same type on $H_0$, then some edge of $\partial H_0$ is not covered, so
it follows that both $M_3,M_4$ are of type 2 on $H_0$. More precisely,
we can assume by symmetry that $|\partial(H_0,A_1) \cap M_3|=2$ and
$|\partial(H_0,A_2) \cap M_4|=2$, {\it i.e.} $uc_1$ belongs to $M_3$ and
$uc_2$ belongs to $M_4$. This implies that $M_3$ is of type $1$ on
$H_1$ and of type $0$ on $H_2$, while $M_4$ is of type $0$ on $H_1$
and of type $1$ on $H_2$.

Since the edge $uc_0$ is not covered by $M_3 \cup M_4$, we can assume
without loss of generality that $M_1$ contains $uc_0$. This implies
that $M_1$ is of type $1$ on both $H_1$ and $H_2$. Combining this with
the conclusion of the previous paragraph, it follows by
Lemma~\ref{lem:cov} that $M_2$ is of type 2 on $H_1$ and $H_2$, which
contradicts Observation~\ref{obs:types1}.
\end{proof}

Let $\mathcal F$ be the family of all graphs inductively defined as
being either the Petersen graph, or a graph obtained from three graphs
$G_0,G_1,G_2$ of $\mathcal F$ by the windmill
construction. Theorem~\ref{th:main} shows that $\mathcal F$ is an
infinite family of snarks with excessive index at least $5$.

\medskip

We now prove that every graph from $\mathcal F$ has excessive index
precisely five. We will indeed prove the stronger statement that every
graph of $\mathcal F$ satisfies the Berge-Fulkerson conjecture,
i.e. has six perfect matchings covering each edge precisely
twice. Such a cover of a cubic bridgeless graph $G$, called a
\emph{Berge-Fulkerson cover}, can be equivalently defined as a coloring
$C$ of the edges of $G$ with subsets of size two of
$\{1,2,3,4,5,6\}$ such that for each triple of edges
$e_1,e_2,e_3$ sharing a vertex, the sets $C(e_i)$ are pairwise
disjoint (equivalently, $C(e_1)\cup C(e_2) \cup
C(e_3)=\{1,2,3,4,5,6\}$).

\begin{lemma}\label{lem:bf}
If $G_0,G_1,G_2$ are cubic bridgeless graphs with a Berge-Fulkerson
cover, then any graph $G$ obtained from $G_0,G_1,G_2$ by the windmill
construction has a Berge-Fulkerson cover.
\end{lemma}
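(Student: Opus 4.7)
The plan is to use the coloring reformulation of a Berge--Fulkerson cover: an assignment of a $2$-subset of $[6]=\{1,\ldots,6\}$ to each edge such that the three labels at every vertex partition $[6]$. From the input covers $C_i$ of $G_i$ I write $\gamma_i=C_i(x_iy_i)$, $\alpha_i=C_i(x_ix_i^0)$, $\beta_i=C_i(x_ix_i^1)$, $\alpha'_i=C_i(y_iy_i^0)$ and $\beta'_i=C_i(y_iy_i^1)$, so that $\{\alpha_i,\beta_i,\gamma_i\}$ and $\{\alpha'_i,\beta'_i,\gamma_i\}$ are both partitions of $[6]$. By first composing each $C_i$ with a permutation $\sigma_i\in S_6$, independently of the others, I may assume $\gamma_0=\{1,2\}$, $\gamma_1=\{3,4\}$ and $\gamma_2=\{5,6\}$, while the action of $\sigma_i$ on the four remaining colors stays free.

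I then define a candidate cover $C$ of $G$ by keeping $C_i$ on the edges of $H_i$, inheriting labels on the boundary edges (so for instance $C(x_{i+1}^0a_i):=\alpha_{i+1}$ and $C(y_{i-1}^0a_i):=\alpha'_{i-1}$), and forcing the three central edges incident to each $c_i$:
\[
C(a_ic_i)=[6]\setminus(\alpha_{i+1}\cup\alpha'_{i-1}),\quad C(b_ic_i)=[6]\setminus(\beta_{i+1}\cup\beta'_{i-1}),\quad C(uc_i)=R_i,
\]
where $R_i$ is the complement of the union of the first two labels. A short set-theoretic computation gives
\[
R_i=(\alpha_{i+1}\cap\beta'_{i-1})\cup(\beta_{i+1}\cap\alpha'_{i-1})\subseteq[6]\setminus(\gamma_{i+1}\cup\gamma_{i-1})=\gamma_i,
\]
and if in addition $\alpha_{i+1}\cap\alpha'_{i-1}=\emptyset$ and $\beta_{i+1}\cap\beta'_{i-1}=\emptyset$, then every element of $\gamma_i$ is forced into $R_i$, so $R_i=\gamma_i$. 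This is the key reduction: the partition condition at each $c_i$ then holds because $|R_i|=2$, and the condition at $u$ holds because $\gamma_0,\gamma_1,\gamma_2$ partition $[6]$ by our choice. Together with the partition conditions inside the three $H_i$'s (immediate from the validity of $C_i$), only the six disjointness conditions at the $a_i$'s and $b_i$'s remain.

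The whole proof therefore reduces to tuning the three $\sigma_i$ so that these six conditions simultaneously hold. I would classify each $C_i$, once the image of $\gamma_i$ has been fixed, according to the relation between the partitions $\{\alpha_i,\beta_i\}$ and $\{\alpha'_i,\beta'_i\}$ of the four-set $[6]\setminus\gamma_i$: either they coincide (\emph{type A}, with the sub-invariant recording whether $\alpha'_i=\alpha_i$ or $\alpha'_i=\beta_i$) or they differ (\emph{type B}). The remaining freedom of $\sigma_i$ lets me set $\{\alpha_i,\beta_i\}$ equal to either the ``parallel'' partition $\{\gamma_{i-1},\gamma_{i+1}\}$ or to one of the two ``crossing'' partitions of $\gamma_{i-1}\cup\gamma_{i+1}$, and lets me choose which of the two pieces is $\alpha_i$. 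The hard part is a finite but slightly delicate case analysis over the triples of types of $(C_0,C_1,C_2)$: in each case I would exhibit an explicit choice of the $\sigma_i$ for which the six disjointness conditions hold. Because these conditions decouple pair by pair on the two elements of each $\gamma_i$, the verification reduces to small combinatorial checks on individual colors, and a valid choice can always be found, yielding the desired Berge--Fulkerson cover of $G$.
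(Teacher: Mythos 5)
Your setup is sound and in fact runs parallel to the paper's own argument: normalize each cover $C_i$ by a permutation $\sigma_i$ of the six colors so that the colors $\gamma_i=C_i(x_iy_i)$ become the three disjoint pairs $\{1,2\},\{3,4\},\{5,6\}$, transport the boundary colors across the cut, and force the colors of $a_ic_i$, $b_ic_i$, $uc_i$ by complementation. Your computation that $C(uc_i)=R_i=\gamma_i$, and hence that the vertex conditions at each $c_i$ and at $u$ hold, is correct \emph{provided} the six disjointness conditions $\alpha_{i+1}\cap\alpha'_{i-1}=\emptyset$ and $\beta_{i+1}\cap\beta'_{i-1}=\emptyset$ hold. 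But this is exactly where your proof stops: the existence of permutations $\sigma_0,\sigma_1,\sigma_2$ satisfying these six conditions simultaneously is only asserted (``a valid choice can always be found''), with a sketched case analysis over the relative types of the partitions at $x_i$ and $y_i$ that is never carried out. Since everything else in the argument is routine bookkeeping, this unproved existence statement is the entire content of the lemma; as written, the proposal is not a proof.

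The gap is fillable, and more easily than your type-by-type plan suggests. Since $\alpha_{i+1},\beta_{i+1}\subseteq\gamma_i\cup\gamma_{i-1}$ and $\alpha'_{i-1},\beta'_{i-1}\subseteq\gamma_i\cup\gamma_{i+1}$, the two conditions at index $i$ together say precisely that for each color $g\in\gamma_i$, exactly one of $\alpha_{i+1}$, $\alpha'_{i-1}$ contains $g$. So only the traces of $\alpha_j$ on $\gamma_{j-1}$ and of $\alpha'_j$ on $\gamma_{j+1}$ matter. Writing $\gamma_i=\{g_i^1,g_i^2\}$, it suffices to choose each $\sigma_j$ so that $\alpha_j\cap\gamma_{j-1}=\{g_{j-1}^1\}$ and $\alpha'_j\cap\gamma_{j+1}=\{g_{j+1}^2\}$ (in particular each of $\alpha_j,\alpha'_j$ meets both $\gamma_{j-1}$ and $\gamma_{j+1}$ in one element); one checks that such a target configuration is realizable by a suitable bijection on $\{1,\ldots,6\}\setminus\gamma_j$ in each of the three possible relative positions of the partitions $\{\alpha_j,\beta_j\}$ and $\{\alpha'_j,\beta'_j\}$ (equal with $\alpha'_j=\alpha_j$, equal with $\alpha'_j=\beta_j$, or crossing), and then all six conditions hold. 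This verification is the analogue of what the paper does explicitly: it first shows one may rename colors so that $i\in C(e_i)$ for the four edges $e_1,\ldots,e_4$ meeting $x_0y_0$ while $x_0y_0$ receives $\{5,6\}$ (the short argument choosing $\gamma$ with $C(e_3),C(e_4)\neq\{\alpha,\gamma\}$), and then writes down the extension with an explicit table of colors for the edges $a_ic_i$, $b_ic_i$. You need to supply this step, in one form or another, for your argument to be complete.
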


\begin{proof}
Consider a Berge-Fulkerson cover $C$ of $G_0$, and let
$e_1,e_2,e_3,e_4$ be the four edges sharing a vertex with the edge
$x_0y_0$, such that $e_1,e_2$ are incident to $x_0$ and $e_3,e_4$ are
incident to $y_0$. Let $\{\alpha,\beta\}= C(e_1)$ and
$\{\gamma,\delta\}= C(e_2)$. Then at least one of $\gamma,\delta$, say
$\gamma$, is such that $C(e_3)$ and $C(e_4)$ are distinct from
$\{\alpha,\gamma\}$. It follows that, after renaming the colors of
$C$, we can assume without loss of generality that $i \in C(e_i)$ for
all $1\leq i \leq 4$, while $x_0y_0$ is colored $\{5,6\}$.

\begin{figure}[ht]
\centering
\includegraphics[width=13cm]{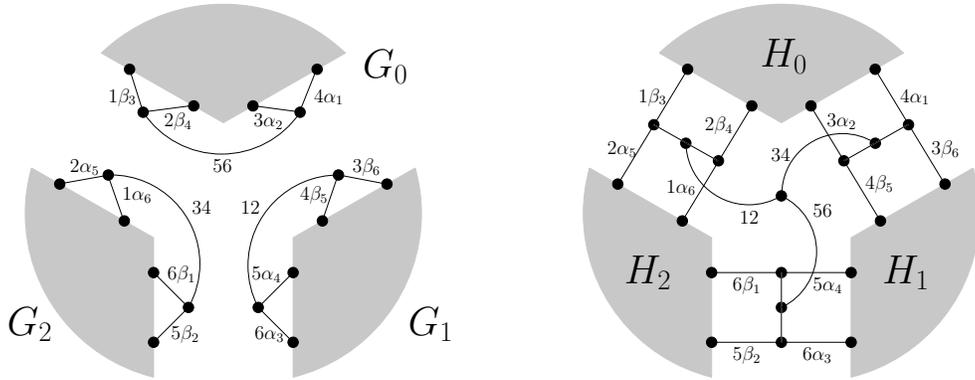}
\caption{The extension of Berge-Fulkerson covers of $G_0,G_1,G_2$ to
  $G$. Colors are denoted by $ij$ instead of $\{i,j\}$.}\label{fig:bf}
\end{figure}

By symmetry, it follows that $G_0,G_1,G_2$ have a Berge-Fulkerson
cover as depicted in Figure~\ref{fig:bf} (left), where for every
$(i,j)\in \{(1,2),(3,4),(5,6)\}$, $\{\alpha_i,\alpha_j\}=
\{\beta_i,\beta_j\}=\{i,j\}$. These covers can be extended into a
Berge-Fulkerson cover of $G$, as described in Figure~\ref{fig:bf}
(right). In order not to overload the figure, we have omitted the
colors of the edges $a_ic_i$ and $b_ic_i$, $0\leq i \leq 2$. It can be
checked that taking $C(a_0c_0)=\{\beta_1,\alpha_4\}$,
$C(b_0c_0)=\{\beta_2,\alpha_3\}$, $C(a_1c_1)=\{\beta_4,\alpha_6\}$,
$C(b_1c_1)=\{\beta_3,\alpha_5\}$, $C(a_2c_2)=\{\alpha_2,\beta_5\}$,
and $C(b_2c_2)=\{\alpha_1,\beta_6\}$ defines a Berge-Fulkerson cover
of $G$.
\end{proof}

Theorem~\ref{th:main} and Lemma~\ref{lem:bf} have the following
immediate corollary:

\begin{corollary}
The family $\mathcal F$ is an infinite family of snarks having excessive
index precisely five.
\end{corollary}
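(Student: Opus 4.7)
The plan is a straightforward two-step induction on the inductive definition of $\mathcal F$, combined with one simple observation linking Berge-Fulkerson covers to the excessive index.

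First, I would establish the lower bound $\chi'_e(G) \geq 5$ for every $G \in \mathcal F$ by induction on the number of applications of the windmill construction. The base case is the Petersen graph, whose excessive index equals $5$ as already recorded in the text (see the discussion preceding Lemma~\ref{lem:tietze}). For the inductive step, if $G$ is obtained from $G_0,G_1,G_2 \in \mathcal F$ by the windmill construction, the inductive hypothesis gives that each $G_i$ is a snark with $\chi'_e(G_i)\geq 5$, so Theorem~\ref{th:main} applies and yields both that $G$ is a snark and that $\chi'_e(G)\geq 5$.

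Second, I would establish the upper bound $\chi'_e(G)\leq 5$ by proving inductively that every $G\in\mathcal F$ admits a Berge-Fulkerson cover. The Petersen graph is well-known to possess such a cover (its edge-set decomposes into six perfect matchings each edge appearing precisely twice), and the inductive step is exactly Lemma~\ref{lem:bf}. The bridge to the excessive index is the following immediate observation: if $M_1,\dots,M_6$ are six perfect matchings of $G$ covering each edge exactly twice, then discarding any single $M_i$ leaves five perfect matchings in which every edge is still covered at least once (edges of $M_i$ were originally covered twice, so one occurrence remains). Hence $\chi'_e(G)\leq 5$, which together with the lower bound yields $\chi'_e(G)=5$.

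Finally, the family $\mathcal F$ is infinite because each windmill construction strictly increases the number of vertices: applied to three snarks on $n_0,n_1,n_2$ vertices it produces a snark on $(n_0-2)+(n_1-2)+(n_2-2)+10$ vertices, so iterating from the Petersen graph yields snarks of unbounded order (the first iteration already gives H\"agglund's $34$-vertex graph $\Ha$). The whole argument is essentially mechanical given Theorem~\ref{th:main} and Lemma~\ref{lem:bf}, and there is no real obstacle; the only point that merits explicit mention is the one-line reduction from a Berge-Fulkerson cover to a cover by five perfect matchings.
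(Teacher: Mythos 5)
Your proof is correct and follows essentially the same route as the paper: the lower bound $\chi'_e(G)\geq 5$ comes from iterating Theorem~\ref{th:main} starting from the Petersen graph, and the upper bound from iterating Lemma~\ref{lem:bf} together with the standard observation that a Berge-Fulkerson cover yields five perfect matchings covering all edges. The paper treats this as an immediate corollary of those two results, exactly as you do.
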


\section{Shortest cycle cover}\label{sec:scc}

The length of a shortest cycle cover of a bridgeless graph $G$ is
denoted by $\scc(G)$. Note that for any cubic graph $G$,
$\scc(G)\geq \tfrac43 |E(G)|$. The purpose of this section is to
show that for any graph $G$ in the family $\mathcal F$ constructed in
the previous section, $\scc(G)>\tfrac43 |E(G)|$. Given some
integer $k$, a \emph{$k$-parity subgraph} of a cubic graph $G$ is a
spanning subgraph of $G$ in which all vertices have degree one, except
$k$ vertices that have degree 3. We first observe the following:

\begin{observation}\label{obs:scc43}
For any cubic graph $G$ and any cycle cover $\cal C$ of $G$, $\cal C$
has length $\tfrac43 |E(G)|$ if and only if the set of edges covered
twice by $\cal C$ is a perfect matching of $G$; and $\cal C$ has
length $\tfrac43 |E(G)|+1$ if and only if the set of edges covered twice
by $\cal C$ is a 1-parity subgraph of $G$.
\end{observation}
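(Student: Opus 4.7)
The plan is to analyse the cycle cover $\mathcal C$ through its local multiplicities. For each edge $e$ let $c(e) = |\{C\in\mathcal C : e\in C\}|$, and for each vertex $v$ let $s_v = \sum_{e\ni v} c(e)$. Since every cycle in $\mathcal C$ is an even subgraph of a cubic graph, each cycle contributes $0$ or $2$ to $s_v$, so $s_v$ is even; combined with $c(e)\ge 1$ for all $e$ this forces $s_v\ge 4$. Summing, the length of $\mathcal C$ equals $\tfrac12\sum_v s_v\ge 2n = \tfrac43|E(G)|$, which already recovers the trivial lower bound; the entire proof amounts to examining how this inequality can be close to tight.

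I would dispatch the first equivalence as a warm-up. Length $\tfrac43|E(G)|$ forces $s_v=4$ at every $v$, and the only way to write $4$ as a sum of three positive integers is $(1,1,2)$; so at each vertex exactly one incident edge has multiplicity $2$ and the other two have multiplicity $1$. This is precisely to say that the edges covered twice form a perfect matching, and in particular no edge is covered more than twice. The converse direction is the direct count $|E(G)| + n/2 = \tfrac43|E(G)|$.

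The second equivalence carries the genuine content. From $\sum_v s_v = 4n+2$ together with $s_v$ even and $s_v\ge 4$, one concludes that exactly one vertex $v_0$ has $s_{v_0}=6$ while every other satisfies $s_v=4$. Writing $6$ as a sum of three positive integers leaves the three patterns $(2,2,2)$, $(1,2,3)$ and $(1,1,4)$ at $v_0$, and the core step — and, I expect, the only non-routine one — is to rule out the latter two. The argument is by propagation to a neighbour: an edge at $v_0$ of multiplicity $m\ge 3$ forces its other endpoint $w$ to have $s_w\ge m+1+1\ge 5$, so $s_w\ge 6$ by parity, contradicting $s_w=4$. Once the pattern at $v_0$ is pinned to $(2,2,2)$ and the pattern at every other vertex is $(1,1,2)$, the set of edges covered twice has degree $3$ at $v_0$ and degree $1$ elsewhere, which is exactly a $1$-parity subgraph. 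The converse is again a direct count: a $1$-parity subgraph has $(n+2)/2$ edges, and if those are the edges covered twice while everything else is covered once, the length is $|E(G)|+(n+2)/2 = 2n+1 = \tfrac43|E(G)|+1$.

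A small but important byproduct of the ruling-out step is that in a cover of length $\tfrac43|E(G)|$ or $\tfrac43|E(G)|+1$ no edge is ever covered more than twice, so that ``the set of edges covered twice'' is unambiguous in both directions of each equivalence; I would flag this point rather than hide it.
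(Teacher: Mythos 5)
Your proposal is correct and follows essentially the same route as the paper: both work with the vertex sums $d_{\mathcal C}(v)$ (your $s_v$), use parity to get $d_{\mathcal C}(v)\ge 4$, and in the $+1$ case exploit that every vertex other than the exceptional one has the $(1,1,2)$ pattern to force the $(2,2,2)$ pattern there (your ``propagate to a neighbour'' step is the same observation, phrased edge-by-edge, as the paper's remark that no edge can be covered more than twice). Your explicit counts for the converse directions and your flag about the meaning of ``covered twice'' are fine additions but do not change the argument.
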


\begin{proof}
For each vertex $v$ of $G$, let $d_{\cal C}(v)$ be the sum of the
degree of $v$ in each of the cycles of $\cal C$. Since $\cal C$ is a
cycle cover and $G$ is cubic, for each vertex $v$, $d_{\cal C}(v)\ge
3$ and $d_{\cal C}(v)$ is an even integer. In particular $d_{\cal
  C}(v)\ge 4$. It follows that $\cal C$ has length $\tfrac43 |E(G)|$
if and only if for each vertex $v$, $d_{\cal C}(v)=4$. This is
equivalent to the fact that two edges incident to $v$ are covered once
by $\cal C$, and the third edge is covered twice by $\cal C$. Hence,
$\cal C$ has length $\tfrac43 |E(G)|$ if and only if the set of edges
covered twice by $\cal C$ is a perfect matching of $G$.

Similarly, $\cal C$ has length $\tfrac43 |E(G)|+1$ if and only if
there is a vertex $u$ with $d_{\cal C}(u)=6$ and for each vertex $v\ne
u$, $d_{\cal C}(v)=4$. Note that as above, if $d_{\cal C}(v)=4$, then two edges
incident to $v$ are covered once by $\cal C$, and the third edge is
covered twice by $\cal C$. In particular, no edge of $G$ is
covered more than twice by $\cal C$. Since $d_{\cal C}(u)=6$, the only
possibility is that each of the three edges incident to $u$ is
covered twice by $\cal C$. Equivalently, the set of edges covered
twice by $\cal C$ is a 1-parity subgraph of $G$.
\end{proof}

Let $G_0,G_1,G_2$ be snarks and let $G$ be a snark obtained from
$G_0,G_1,G_2$ by the windmill construction.  Recall that the
\emph{type} of a perfect matching of $G$ on $H_i$ was defined in the
previous section. We define the type of a 1-parity subgraph $P$ of $G$
on $H_i$ similarly: If $|P\cap \partial H_i|=j$ with $j\in\{0,4\}$, we
say that $P$ is of \textit{type $j$} on $H_i$ (as it was observed in
the previous section, type $4$ cannot occur when $P$ is a perfect
matching). If $|P\cap \partial H_i|=2$ we consider two cases: we say
that $P$ is of \textit{type $1$} on $H_i$ if $|P\cap
\partial(H_i,A_{i-1})|=|P\cap \partial(H_i,A_{i+1})|=1$, while $P$ is
of \textit{type $2$} on $H_i$ otherwise (in this case one of the two
sets of edges $P\cap \partial(H_i,A_{i-1})$, $P\cap
\partial(H_i,A_{i+1})$ has cardinality $2$ and the other is empty).

\smallskip

Let ${\cal C}$ be a cycle cover of $G$. For any edge $e$, we denote by
${\cal C}(e)$ the set of cycles of ${\cal C}$ containing the edge $e$.
In the next two lemmas, we fix $i,j\in \{0,1,2\}$ with $i\ne j$,
and call $e_j$ and $f_j$ the edges connecting $H_i$ to $a_j$ and $b_j$
respectively.

\begin{lemma}\label{lem:sccgi43}
Assume $G$ has a cycle cover $\cal C$ of length $\tfrac43 |E(G)|$. Let
$M$ be the perfect matching of $G$ consisting of the edges covered
twice by $\cal C$. If there are distinct cycles
$x,y\in \cal C$, such that either (i) $M$ is of type 0 on $H_i$ and
$\{{\cal C}(e_{j}),{\cal C}(f_{j})\}=\{\{x\},\{y\}\}$, or (ii) $M$ is of
type 1 on $H_i$ and $\{{\cal C}(e_{j}),{\cal
  C}(f_{j})\}=\{\{x\},\{x,y\}\}$, then $G_i$ has a cycle cover ${\cal
  C}_i$ of length $\tfrac43 |E(G_i)|$.
\end{lemma}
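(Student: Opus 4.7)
The plan is to build the cycle cover $\mathcal{C}_i$ of $G_i$ from $\mathcal{C}$ by restricting every cycle $c\in\mathcal{C}$ to $H_i$ and then closing the odd-degree boundary vertices of $c\cap H_i$ by short paths through the two vertices $x_i,y_i$ that were removed to form $H_i$; the connected components of each closed subgraph are then taken as the cycles of $\mathcal{C}_i$. Recall that $G_i$ is obtained from $H_i$ by adding $x_i,y_i$ together with the five edges $x_ix_i^0$, $x_ix_i^1$, $x_iy_i$, $y_iy_i^0$, $y_iy_i^1$, and that for any $c\in\mathcal{C}$ the subgraph $c\cap H_i$ is even at every vertex except at the endpoints in $V(H_i)$ of the edges of $c\cap\partial H_i$, with $|c\cap\partial H_i|$ even.

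The first step is to pin down how the cycles of $\mathcal{C}$ use the four edges of $\partial H_i$. In case~(i), $M$ is of type $0$ on $H_i$ so each boundary edge is covered exactly once; since $e_j\in x\setminus y$ and $f_j\in y\setminus x$, parity (each cycle has even intersection with $\partial H_i$) forces each of $x$ and $y$ to contain exactly one of the two remaining boundary edges $g,h$, the two covering edges being disjoint, and no other cycle of $\mathcal{C}$ touches $\partial H_i$. In case~(ii), without loss of generality $\mathcal{C}(e_j)=\{x\}$ and $\mathcal{C}(f_j)=\{x,y\}$, so $f_j\in M$; by type~1, exactly one of the remaining two boundary edges, call it $g$, lies in $M$, while the last one, $h$, is covered once. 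A short parity-and-multiplicity check on $x,y$ and potential further cycles then isolates exactly two configurations: either (a) $x$ meets all four boundary edges and $y$ meets $\{f_j,g\}$, or (b) $x$ meets only $\{e_j,f_j\}$, $y$ meets $\{f_j,g\}$, and some third cycle $z\in\mathcal{C}$ meets $\{g,h\}$.

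Once the configuration is identified, the closing edges through $\{x_i,y_i\}$ are essentially forced by the parity requirement at $x_i,y_i$ together with the simple-subgraph restriction that no single cycle may use the edge $x_iy_i$ twice. In case~(i) I close $x\cap H_i$ by the $3$-edge path $y_i^0 y_i x_i x_i^0$ and $y\cap H_i$ by $y_i^1 y_i x_i x_i^1$ (or their mirror), so that $x_iy_i$ is the unique doubly-covered new edge. In case~(ii)(a) the four odd endpoints of $x\cap H_i$ must be paired as $\{y_i^0,y_i^1\}$ and $\{x_i^0,x_i^1\}$ using the edges $y_iy_i^0, y_iy_i^1, x_ix_i^0, x_ix_i^1$ (the alternative pairings would use $x_iy_i$ twice within $x$), while $y\cap H_i$ is closed through $x_iy_i$ by $y_i^1 y_i x_i x_i^0$. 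In case~(ii)(b) the closures of $x,y,z$ have $2,3,2$ edges respectively, and only $y$'s closure uses $x_iy_i$. In every case all five new edges are covered, cycles of $\mathcal{C}$ disjoint from $\partial H_i$ contribute their components in $H_i$ unchanged, and a direct check shows that the doubly-covered edges of $\mathcal{C}_i$ form $(M\cap E(H_i))\cup T$ for a set $T$ of one or two new edges, which in each case is a perfect matching of $G_i$. By Observation~\ref{obs:scc43} the resulting cover $\mathcal{C}_i$ has length $\tfrac{4}{3}|E(G_i)|$.

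The main obstacle is the case analysis in case~(ii): one has to combine the evenness of $|c\cap\partial H_i|$ for every $c\in\mathcal{C}$, the multiplicity constraints on each boundary edge (once if outside $M$, twice if in $M$), and the simple-subgraph condition on $x$ in order to isolate the two configurations (a) and (b) and rule out every other possibility. Once this structural step is settled, the closing paths are forced up to symmetry, and the length and covering verifications reduce to a routine application of Observation~\ref{obs:scc43}.
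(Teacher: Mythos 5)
Your construction is exactly the one the paper uses: restrict each cycle to $H_i$, close the odd boundary vertices by the forced paths through $x_i$ and $y_i$ (the paper presents these closures via Figure~\ref{fig:sccext}, with your configuration (a) being its remark that $x$ and $z$ may coincide), observe that the doubly covered edges form a perfect matching of $G_i$, and conclude by Observation~\ref{obs:scc43}. The proposal is correct and essentially identical to the paper's proof, just with the figure replaced by an explicit case analysis.
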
 

\begin{proof}
In both cases we obtain a cycle cover of $G_i$ such that the set of
edges covered twice is a perfect matching of $G_i$ (see
Figure~\ref{fig:sccext}, where cycles of $G_i$ are represented by
dashed or dotted lines for more clarity). By
Observation~\ref{obs:scc43} this cycle cover has length $\tfrac43
|E(G_i)|$. Note that in case (ii) (see Figure~\ref{fig:sccext},
right), $x$ and $z$ might be the same cycle.
\end{proof}

\begin{figure}[ht]
\centering
\includegraphics[width=10cm]{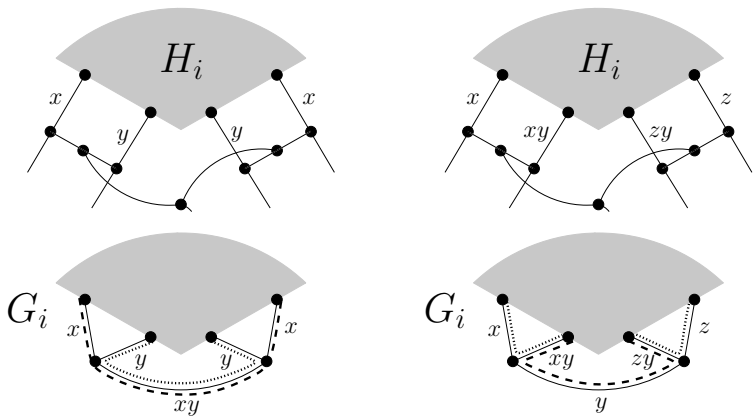}
\caption{Constructing cycle covers of $G_i$ from cycle covers of
  $G$.}\label{fig:sccext}
\end{figure}

Given a cycle cover $\cal C$ of $G$ of length $\tfrac43 |E(G)|+1$, the
unique vertex of degree 3 in the 1-parity subgraph of $G$ associated
to $\cal C$ will be denoted by $t_{\cal C}$ (or simply $t$ if the cycle
cover is implicit). The same proof as above, applied this time to
1-parity subgraphs, gives the following slightly different lemma.

\begin{lemma}\label{lem:sccgi431}
Assume $G$ has a cycle cover $\cal C$ of length $\tfrac43
|E(G)|+1$. Let $P$ be the 1-parity subgraph of $G$ consisting of the
edges covered twice by $\cal C$. If there are distinct cycles $x,y\in
\cal C$, such that either (i) $P$ is of type 0 on $H_i$ and $\{{\cal
  C}(e_{j}),{\cal C}(f_{j})\}=\{\{x\},\{y\}\}$, or (ii) $P$ is of type
1 on $H_i$ and $\{{\cal C}(e_{j}),{\cal
  C}(f_{j})\}=\{\{x\},\{x,y\}\}$, then $G_i$ has a cycle cover ${\cal
  C}_i$ of length at most $\tfrac43 |E(G_i)|+1$. Moreover, if $t_{\cal
  C}$ does not lie in $H_i$ or its neighborhood in $G$, then ${\cal
  C}_i$ has length at most $\tfrac43 |E(G_i)|$.
\end{lemma}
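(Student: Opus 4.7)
The plan is to apply verbatim the construction from the proof of Lemma~\ref{lem:sccgi43}: it depends only on the restriction of $\cal C$ to $H_i$ and on the cycles containing $e_j$ and $f_j$, and since these data are constrained identically under our hypotheses (i) and (ii), the construction produces a well-defined cycle cover ${\cal C}_i$ of $G_i$. The only difference from the setting of Lemma~\ref{lem:sccgi43} is that the set of doubly-covered edges of $\cal C$ now forms a 1-parity subgraph $P$ instead of a perfect matching, so the resulting set $D_i$ of doubly-covered edges of ${\cal C}_i$ may itself be a 1-parity subgraph of $G_i$ rather than a perfect matching.

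To bound the length of ${\cal C}_i$, I would analyze $D_i$ vertex by vertex. The construction only alters edge multiplicities on $\partial H_i$ and on the new edges incident to $x_i, y_i$; every other edge retains the multiplicity it had in $\cal C$. Hence for each interior vertex $v \in V(H_i) \setminus \{x_i^0, x_i^1, y_i^0, y_i^1\}$, we have $\deg_{D_i}(v) = \deg_P(v)$, which equals $3$ if $v = t_{\cal C}$ and $1$ otherwise. The verification done in the proof of Lemma~\ref{lem:sccgi43} moreover yields $\deg_{D_i}(v) = 1$ for $v \in \{x_i, y_i\}$, as well as for $v \in \{x_i^0, x_i^1, y_i^0, y_i^1\}$ provided $P$ is locally 1-regular at $v$. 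In all cases $D_i$ contains at most one degree-$3$ vertex, so $D_i$ is either a perfect matching or a 1-parity subgraph of $G_i$; by Observation~\ref{obs:scc43}, $|{\cal C}_i| \le \tfrac{4}{3}|E(G_i)| + 1$.

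For the ``moreover'' clause, if $t_{\cal C}$ does not lie in $V(H_i) \cup \{a_{i-1}, b_{i-1}, a_{i+1}, b_{i+1}\}$, then $P$ is 1-regular throughout $V(H_i)$ together with its external neighbors, the boundary proviso above is automatically satisfied, and the argument of Lemma~\ref{lem:sccgi43} delivers $D_i$ as a perfect matching of $G_i$; hence $|{\cal C}_i| = \tfrac{4}{3}|E(G_i)|$, as required.

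The main obstacle is the remaining boundary case, where $t_{\cal C}$ is itself one of the vertices $x_i^0, x_i^1, y_i^0, y_i^1$ or one of the external neighbors $a_{i\pm 1}, b_{i\pm 1}$. In these situations at least one $\partial H_i$-edge incident to $t_{\cal C}$ lies in $P$, and a short case analysis---splitting on the type of $P$ on $H_i$ and on which of $e_j, f_j$ is doubly covered---is needed to confirm that the construction still yields $D_i$ with at most one degree-$3$ vertex (rather than two), thereby preserving the 1-parity structure on the $G_i$ side.
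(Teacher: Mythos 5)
Your proposal takes essentially the same route as the paper, whose proof of this lemma is literally to reapply the construction of Lemma~\ref{lem:sccgi43} to the 1-parity subgraph $P$ and read off the length from Observation~\ref{obs:scc43}, so your degree-by-degree bookkeeping is a faithful (indeed more detailed) version of the paper's argument. The boundary case you defer is routine and resolves favorably: in case (i) the vertex $t_{\cal C}$ cannot lie in the neighborhood of $H_i$ at all (type 0 forbids edges of $P$ in $\partial H_i$), and in case (ii) a boundary vertex or external neighbor playing the role of $t_{\cal C}$ forces its $\partial H_i$-edge into $P$, after which the same rerouting leaves $D_i$ with at most the single degree-3 vertex $t_{\cal C}$ (or none, when $t_{\cal C}$ is an external neighbor), as you assert.
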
 

\begin{figure}[ht]
\centering
\includegraphics[width=6cm]{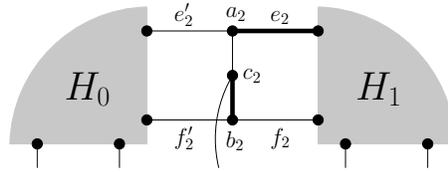}
\caption{Notation in the proof of Theorem \ref{th:scc431}.} \label{fig:th12}
\end{figure}

We now use Lemma~\ref{lem:sccgi43} to prove the main result of this
section.

\begin{theorem}\label{th:scc431}
Let $G_0,G_1,G_2$ be snarks such that $\scc(G_i)> \frac43 |E(G_i)|$
for $i\in \{0,1,2\}$. Then any graph $G$ obtained from $G_0,G_1,G_2$
by the windmill construction is a
snark with $\scc(G)> \frac43 |E(G)| $.
\end{theorem}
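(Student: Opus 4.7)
The plan is to mirror the proof of Theorem~\ref{th:main}, trading the excessive-index analysis for a shortest-cycle-cover analysis and using Lemma~\ref{lem:sccgi43} in place of Lemma~\ref{lem:cov}. Suppose for contradiction that $\cal C$ is a cycle cover of $G$ of length $\tfrac43|E(G)|$. By Observation~\ref{obs:scc43}, the set $M$ of edges covered twice by $\cal C$ is a perfect matching of $G$, so the type terminology from the previous section applies. The aim is to produce a cycle cover of some $G_i$ of length $\tfrac43|E(G_i)|$, contradicting $\scc(G_i)>\tfrac43|E(G_i)|$.

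Observation~\ref{obs:types1} forces $M$ to have type $0$ on exactly one $H_i$, type $1$ on at least one, and type $2$ on at most one, so up to relabelling the only configurations are $(0,1,1)$ and $(0,1,2)$. For each configuration I would first read off from the prescribed types which edges of the central gadget $A_0 \cup A_1 \cup A_2 \cup \{u\}$ lie in $M$; this is fully determined (up to the symmetries already exploited in Theorem~\ref{th:main}) by the fact that $M$ is a perfect matching and that each $c_i$ and the vertex $u$ have exactly one $M$-edge. I would then locate an index $i$ on which $M$ has type $0$ or $1$ and a $j\in\{i-1,i+1\}$ for which hypothesis (i) or (ii) of Lemma~\ref{lem:sccgi43} holds; applying the lemma produces a cycle cover of $G_i$ of length $\tfrac43|E(G_i)|$, the desired contradiction.

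Verifying the lemma's hypothesis comes down to a local calculation at a degree-$3$ vertex $a_j$ or $b_j$: its cycle-cover degree is $4$, so its unique $M$-edge lies in exactly two cycles of $\cal C$, and each of these two cycles picks up a distinct non-$M$ edge at that vertex. Propagating this observation through $a_j$, $b_j$, $c_j$, and $u$ in the style of Theorem~\ref{th:main}, either $e_j$ and $f_j$ lie in distinct cycles (matching (i) when both are outside $M$) or $\{{\cal C}(e_j),{\cal C}(f_j)\}=\{\{x\},\{x,y\}\}$ (matching (ii) when exactly one of them lies in $M$). The main obstacle is that a ``good'' pair $(i,j)$ need not be available for an arbitrary $\cal C$: one has to rule out, by the same forcing argument that drives the end of the proof of Theorem~\ref{th:main}, the exceptional configurations in which every potential reduction is blocked, the parity/type contradiction now being obtained at $u$ rather than via a hypothetical fourth perfect matching. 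Once this is done, the cycle-cover version runs in parallel with the excessive-index proof.
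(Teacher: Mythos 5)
Your overall strategy --- pass to the perfect matching $M$ of doubly covered edges via Observation~\ref{obs:scc43}, normalize the types of $M$ on the $H_i$'s via Observation~\ref{obs:types1}, and reduce to some $G_i$ via Lemma~\ref{lem:sccgi43} --- is exactly the paper's strategy, but the decisive step is missing, and the claim you substitute for it is false. You assert that propagating the local degree count through $a_j,b_j,c_j,u$ shows that the two edges joining a type-0 piece to $A_j$ automatically lie in distinct cycles (so that case (i) applies), and that case (ii) handles the type-1 pieces. This is not so. Take $M$ of type 0 on $H_0$ and type 1 on $H_1$, and (after symmetry) $e_2,b_2c_2\in M$, where $e_2,f_2$ join $H_1$ to $a_2,b_2$ and $e_2',f_2'$ join $H_0$ to $a_2,b_2$. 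The cycle $x$ that covers $e_2$ and $a_2c_2$ also covers $b_2c_2$, and at $b_2$ it continues along $f_2$ or along $f_2'$. If it continues along $f_2$, the second cycle through $b_2c_2$ --- which may perfectly well be the cycle $y$ covering $e_2$ and $e_2'$ --- is the one covering $f_2'$; then both edges $e_2',f_2'$ leaving the type-0 piece $H_0$ can lie on one and the same cycle, so hypothesis (i) fails for $H_0$, and your fallback of a ``parity/type contradiction at $u$'' has no counterpart in this setting: there is no fourth matching or type count left to contradict, and you give no argument ruling out such blocked configurations.

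What actually closes the proof is precisely this branch analysis at $b_2$: if $x$ continues along $f_2'$, then ${\cal C}(e_2')=\{y\}$ and ${\cal C}(f_2')=\{x\}$ with $x\ne y$, and Lemma~\ref{lem:sccgi43}, case (i), applied to $H_0$ yields a cycle cover of $G_0$ of length $\frac43|E(G_0)|$; if instead $x$ continues along $f_2$, then ${\cal C}(e_2)=\{x,y\}$ and ${\cal C}(f_2)=\{x\}$, and Lemma~\ref{lem:sccgi43}, case (ii), applied to $H_1$ yields a cycle cover of $G_1$ of length $\frac43|E(G_1)|$; either way the hypothesis $\scc(G_i)>\frac43|E(G_i)|$ is contradicted. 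So the two cases of the lemma are not attached independently to type-0 and type-1 pieces; they are the two possible outcomes of following a single cycle through the gadget $A_2$ shared by a type-0 and a type-1 piece, and one of them always fires. Your sketch names the right tools, but exactly this tracing argument --- the entire content of the theorem beyond the two preliminary lemmas --- is left unproved, and the dichotomy you state in its place does not hold.
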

\begin{proof}
Assume that $G$ has a cycle cover $\cal C$ of length $\frac43 |E(G)|$,
and let $M$ be the perfect matching consisting of the edges covered
twice by $\cal C$. By Observation \ref{obs:types1}, we can assume that
$M$ is of type $0$ on $H_0$ and type $1$ on $H_1$. Let $e_2,f_2$ be
the edges connecting $H_1$ to $a_2,b_2$ and let $e_2',f_2'$ be the
edges connecting $H_0$ to $a_2,b_2$ (see Figure~\ref{fig:th12}). By
symmetry we can assume that $e_2$ and $b_2c_2$ are in $M$. Let $x,y
\in \cal C$ be the two cycles covering $e_2$. Then one of them, say
$x$, also covers $b_2c_2$, and therefore covers either $f_2$ or
$f_2'$. If $x$ covers $f_2'$ we obtain a contradiction with
Lemma~\ref{lem:sccgi43}, case (i), and if $x$ covers $f_2$ we obtain a
contradiction with Lemma~\ref{lem:sccgi43}, case (ii).
\end{proof}

Recall that beside the Petersen graph, only one snark $G$ with
$\scc(G)>\frac43 |E(G)| $ was known~\cite{BriGoeHAgMar} (and the
proof of it was computer-assisted). Theorem~\ref{th:scc431} has the
following immediate corollary:

\begin{corollary}
The family $\mathcal F$ is an infinite family of snarks such that for
any $G\in {\cal F}$, $\scc(G)>\frac43 |E(G)| $.
\end{corollary}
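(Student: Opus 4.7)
The plan is to proceed by structural induction on the definition of $\mathcal F$, using Theorem~\ref{th:scc431} as the inductive step. Every member of $\mathcal F$ is either the Petersen graph or is obtained from three strictly smaller members by the windmill construction, so it suffices to verify the base case and then invoke Theorem~\ref{th:scc431} directly.

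For the base case, I would observe that the Petersen graph $P$ satisfies $\scc(P)>\tfrac43|E(P)|$; this is well known, and in fact it is proved in~\cite{BriGoeHAgMar} that $\scc(P)=\tfrac43|E(P)|+1$. (One way to see the strict inequality without quoting the computer-assisted result: by Observation~\ref{obs:scc43}, equality $\scc(P)=\tfrac43|E(P)|$ would force the edges of $P$ covered twice by some cycle cover to form a perfect matching $M$; but then $P\setminus M$ is a disjoint union of cycles, i.e.\ a 2-factor, so the cycle cover would be obtained by doubling $M$ and adding the 2-factor, meaning that $P$ would be 3-edge-colorable — contradicting the fact that $P$ is a snark.)

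For the inductive step, assume that $G_0,G_1,G_2\in\mathcal F$ all satisfy $\scc(G_i)>\tfrac43|E(G_i)|$, and let $G$ be a graph obtained from them by the windmill construction. By Lemma~\ref{lem:snark} (and the fact, already established in the corollary to Theorem~\ref{th:main}, that members of $\mathcal F$ are snarks), $G$ is a snark. By Theorem~\ref{th:scc431} applied to $G_0,G_1,G_2$, we conclude $\scc(G)>\tfrac43|E(G)|$, which is the desired inequality.

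Since the construction produces arbitrarily large graphs (each application of the windmill construction strictly increases the vertex count), $\mathcal F$ is infinite, completing the proof. There is no real obstacle here: the entire content of the corollary is already packaged in Theorem~\ref{th:scc431}, and the only point worth checking carefully is that the base case — the Petersen graph — lies strictly above the $\tfrac43|E(G)|$ bound, which follows from the short argument above.
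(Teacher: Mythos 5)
Your main argument is exactly the paper's (implicit) one: the corollary is obtained by structural induction over $\mathcal F$, with Theorem~\ref{th:scc431} as the inductive step and the Petersen graph as the base case, for which one cites the known fact that it has no cycle cover of length $20=\tfrac43|E|$ (the paper itself uses this fact as known, e.g.\ in the proofs of Lemma~\ref{lem:noci} and Theorem~\ref{th:scc432}). That part is fine and is all the corollary needs.

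However, your parenthetical ``elementary'' justification of the base case is incorrect, and you should not rely on it. From a cycle cover of length $\tfrac43|E(P)|$, Observation~\ref{obs:scc43} only tells you that the doubly covered edges form a perfect matching $M$; the complement $P\setminus M$ being a 2-factor is automatic for \emph{every} perfect matching of a cubic bridgeless graph and carries no information, and the cover need not be ``$M$ doubled plus the 2-factor'' (a doubled matching is not a cycle in the paper's sense, and the circuits of the cover in general mix $M$-edges and non-$M$-edges). In particular, the existence of a cycle cover of length $\tfrac43 m$ does \emph{not} imply 3-edge-colorability: as recalled in the introduction, by~\cite{BriGoeHAgMar} every snark on at most 36 vertices other than the Petersen graph and $\Ha$ has a cycle cover of length exactly $\tfrac43 m$ while being non-3-edge-colorable. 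So if you want to avoid quoting the known value $\scc(P)=\tfrac43|E(P)|+1$, you need a genuine Petersen-specific argument; the short deduction you sketched proves nothing. With the base case justified by the citation (as in the paper), the rest of your induction is correct.
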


\begin{figure}[ht]
\centering
\includegraphics[width=6cm]{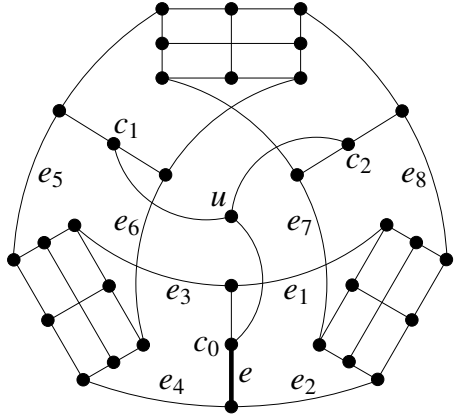}
\caption{The graph $\Ha$ in Lemma~\ref{lem:noci}. The bold edge $e$
  will be used in the proof of Theorem \ref{th:scc432}.} \label{fig:Hagg}
\end{figure}

We now prove that in a cycle cover $\cal C$ of the graph $\Ha$ of
Figure~\ref{fig:Hagg} of length $\tfrac43 |E(\Ha)|+1$, some vertices
cannot play the role of $t_{\cal C}$ (the only vertex of degree 3 in
the 1-parity subgraph of $\Ha$ associated to $\cal C$).

\begin{lemma}\label{lem:noci}
Let $G_0,G_1,G_2$ be three copies of the Petersen graph. Then the
graph $\Ha$ obtained from $G_0,G_1,G_2$ by the windmill construction
is a snark such that:

1. $\scc(\Ha)=\frac43 |E(\Ha)| + 1$ \smallskip

2. For every cycle cover $\cal C$ of $\Ha$ of minimum length, the
vertex $t_{\cal C}$ cannot be one of the vertices $c_i$ (see
Figure~\ref{fig:Hagg}).
\end{lemma}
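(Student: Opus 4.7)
For Part 1, the lower bound $\scc(\Ha) > \tfrac43|E(\Ha)|$ is immediate from Theorem~\ref{th:scc431} applied to the three copies of the Petersen graph, so it suffices to exhibit a cycle cover of $\Ha$ of length exactly $\tfrac43|E(\Ha)|+1=69$. Such a cover can be built by starting from a length-$21$ cycle cover of each Petersen subgraph $G_j$ (which exists, the corresponding $1$-parity subgraph having six edges by Observation~\ref{obs:scc43}) and gluing these three covers together through the central gadget $A_0\cup A_1\cup A_2\cup\{u\}$, arranging so that the unique degree-$3$ vertex of the resulting $1$-parity subgraph is one of the ports $x_j^0,x_j^1,y_j^0,y_j^1$ rather than any $c_j$.

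For Part 2, I argue by contradiction. Let $\mathcal{C}$ be a cycle cover of $\Ha$ of length $69$ with $t_{\cal C}=c_i$, and let $P$ be the $1$-parity subgraph of edges covered twice. I first determine the type of $P$ on each $H_j$. Since $c_i$ has degree $3$ in $P$, the three edges $a_ic_i,b_ic_i,uc_i$ lie in $P$; the unit degree of $u$ then forces $uc_{i\pm1}\notin P$, and the unit degrees of $a_i$ and $b_i$ force the four edges $a_ix_{i+1}^0,a_iy_{i-1}^0,b_ix_{i+1}^1,b_iy_{i-1}^1$ out of $P$. Thus $P\cap\partial(H_{i+1},A_i)=P\cap\partial(H_{i-1},A_i)=\emptyset$. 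A similar inspection at the vertices of $A_{i\pm1}$ shows that $|P\cap\partial(H_i,A_{i\pm1})|+|P\cap\partial(H_{i\mp1},A_{i\pm1})|=1$, and since $|P\cap\partial H_{i\pm1}|=|P\cap\partial(H_{i\pm1},A_{i\mp1})|$ must be even by parity (as $|V(H_{i\pm1})|=8$), this quantity equals $0$. Hence $|P\cap\partial(H_{i\pm1},A_{i\mp1})|=0$ and $|P\cap\partial(H_i,A_{i\pm1})|=1$, so $P$ is of type $0$ on $H_{i+1}$ and on $H_{i-1}$, and of type $1$ on $H_i$.

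Next I analyze the cycle structure near $c_i$. Because every cycle of $\mathcal{C}$ has degree $0$ or $2$ at any vertex of $\Ha$, and $a_i$ has degree $4$ in $\mathcal{C}$ with $a_ic_i$ covered twice while the other two edges at $a_i$ are each covered once, exactly two cycles of $\mathcal{C}$ pass through $a_i$, pairing $\{a_ic_i,a_ix_{i+1}^0\}$ and $\{a_ic_i,a_iy_{i-1}^0\}$. Denote them by $A$ and $B$, so that $\mathcal{C}(a_ix_{i+1}^0)=\{A\}$, $\mathcal{C}(a_iy_{i-1}^0)=\{B\}$ with $A\neq B$; a symmetric analysis at $b_i$ yields distinct cycles $A',B'$ with $\mathcal{C}(b_ix_{i+1}^1)=\{A'\}$ and $\mathcal{C}(b_iy_{i-1}^1)=\{B'\}$. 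The same counting at $c_i$ shows that three \emph{distinct} cycles of $\mathcal{C}$ pass through $c_i$, realizing respectively the three possible pairings $\{a_ic_i,b_ic_i\}$, $\{a_ic_i,uc_i\}$, $\{b_ic_i,uc_i\}$. If both $A=A'$ and $B=B'$, then $A$ and $B$ each realize the pairing $\{a_ic_i,b_ic_i\}$ at $c_i$ and hence coincide, producing a cycle of odd degree $3$ at $a_i$---a contradiction. Thus $A\neq A'$ or $B\neq B'$; the two cases are symmetric (swap the roles of $H_{i+1}$ and $H_{i-1}$), so assume $A\neq A'$. Condition~(i) of Lemma~\ref{lem:sccgi431} then applies to $H_{i+1}$ at the port pair towards $A_i$; moreover, $c_i$ lies neither in $V(H_{i+1})$ nor in its neighborhood $\{a_i,b_i,a_{i-1},b_{i-1}\}$ in $\Ha$, so the ``moreover'' clause of Lemma~\ref{lem:sccgi431} produces a cycle cover of $G_{i+1}$ of length at most $\tfrac43|E(G_{i+1})|=20$, contradicting $\scc(G_{i+1})=21$ since $G_{i+1}$ is the Petersen graph. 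The main obstacle is the distinctness step, which relies crucially on the forced pairing structure at $c_i$; everything else is parity bookkeeping.
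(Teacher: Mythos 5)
Your Part 2 argument is correct and is essentially the paper's proof: you force the three edges at $c_i$ into $P$, propagate the degree-one constraints through $u$ and the sets $A_{i\pm1}$, use parity on $|\partial H_{i\pm1}|$ to conclude $P$ is of type $0$ on $H_{i+1}$ and $H_{i-1}$, then observe that the three edges at $c_i$ lie on three distinct cycles so that at least one boundary pair towards $H_{i+1}$ or $H_{i-1}$ is covered by two distinct cycles, and finally invoke Lemma~\ref{lem:sccgi431}, case (i) (with its ``moreover'' clause, since $c_i$ is not in $H_{i\pm1}$ nor its neighborhood) against $\scc(\text{Petersen})=21$. This matches the paper's argument step for step, only phrased with the explicit pairing of cycles $A,B,A',B'$ at $a_i,b_i$ instead of the paper's ``at least one of the pairs is split'' formulation.

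Part 1 is where your proposal has a genuine gap. The lower bound via Theorem~\ref{th:scc431} is fine, but the upper bound --- the existence of a cycle cover of $\Ha$ of length exactly $\tfrac43|E(\Ha)|+1=69$ --- is only asserted: ``gluing'' three length-$21$ covers of the Petersen copies through the central gadget and ``arranging'' the unique degree-$3$ vertex to be a port is not a construction. In particular, each length-$21$ Petersen cover contributes its own degree-$3$ vertex to the doubled-edge subgraph, so a naive combination yields three defects, not one, and you do not explain how the cycles through $x_j,y_j$ are rerouted through $A_0\cup A_1\cup A_2\cup\{u\}$ so that the doubled edges form a $1$-parity subgraph; nothing in your sketch rules out that every attempted gluing forces extra doubled edges. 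The paper does not construct such a cover either: it obtains Part 1 by citing the (computer-assisted) result of Brinkmann, Goedgebeur, H\"agglund, and Markstr\"om~\cite{BriGoeHAgMar}, which shows $\Ha$ has no cover of length $\tfrac43|E(\Ha)|$ but does have one of length $\tfrac43|E(\Ha)|+1$. So either cite that result, or actually exhibit a length-$69$ cover; as written, the upper bound in your Part 1 is unproven.
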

\begin{proof}
The first part of the lemma was proved in \cite{BriGoeHAgMar}. Assume
now that the second part fails for some cycle cover $\cal C$ of length
$\frac43 |E(\Ha)| + 1$, and let $P$ be the 1-parity subgraph associated
to $\cal C$. By symmetry, we suppose $t_{\cal C}=c_0$. It implies
that all three edges incident to $c_0$ are in $P$. As a consequence,
each edge $e_i$, $1\leq i \leq 4$, is covered only once by $\cal C$
(see Figure~\ref{fig:Hagg}). It follows also that one edge incident to
$c_1$ and distinct from $c_1u$ is covered twice, and by parity this
implies that $e_5$ and $e_6$ are covered once. By a symmetric
argument, $e_7$ and $e_8$ are covered once, therefore $P$ is of type 0
on $H_1$ and $H_2$. The edges incident to $c_0$ are covered by three
different cycles, so at least one of the pairs $e_1,e_2$ and $e_3,e_4$
is such that the two edges in the pair are not covered by the same
cycle of $\cal C$. This contradicts Lemma~\ref{lem:sccgi431}, case
(i), since the Petersen graph has no cycle cover of length 20.
\end{proof}

\begin{figure}[ht]
\centering
\includegraphics[width=10cm]{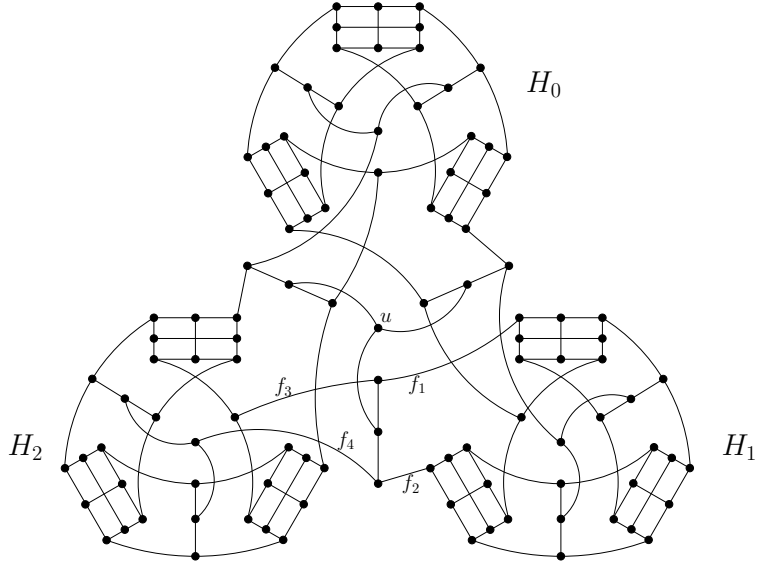}
\caption{A snark $G$ with $\scc(G)\geq \tfrac43 |E(G)| + 2
  $.}\label{fig:scc}
\end{figure}

We now use Lemma~\ref{lem:noci} to exhibit the first known snark $G$
satisfying $\scc(G)> \tfrac43 |E(G)| + 1 $.

\begin{theorem}\label{th:scc432}
Any cycle cover of the graph $G$ depicted in Figure
\ref{fig:scc} has length at least $\tfrac43 |E(G)| + 2 $.
\end{theorem}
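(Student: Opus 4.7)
The plan is to assume for contradiction that $G$ admits a cycle cover $\mathcal C$ of length exactly $\tfrac43|E(G)|+1$, and to derive a contradiction via the 1-parity subgraph $P$ of $G$ consisting of the edges covered twice by $\mathcal C$ (Observation \ref{obs:scc43}). Write $t = t_{\mathcal C}$ for its unique degree-3 vertex. Since Lemma \ref{lem:noci}(1) gives $\scc(\Ha) > \tfrac43|E(\Ha)|$, Theorem \ref{th:scc431} applied to $G$ (a windmill of three copies of $\Ha$) already forces $\scc(G) > \tfrac43|E(G)|$, so it only remains to exclude length exactly $\tfrac43|E(G)|+1$.

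The engine of the argument is Lemma \ref{lem:sccgi431}: the plan is, after a case analysis on the location of $t$, to extract from $\mathcal C$ a cycle cover $\mathcal C_i$ of some $G_i = \Ha$ of length at most $\tfrac43|E(\Ha)|+1$. Whenever $\mathcal C_i$ happens to have length $\tfrac43|E(\Ha)|$ the proof is already over by Lemma \ref{lem:noci}(1); otherwise $\mathcal C_i$ has length exactly $\tfrac43|E(\Ha)|+1$, and one pins down the location of the new degree-3 vertex $t_{\mathcal C_i}$ of the resulting 1-parity subgraph of $\Ha$. The target is to show that $t_{\mathcal C_i}$ is forced to be one of the three vertices $c_0, c_1, c_2$ of the internal windmill structure of $\Ha$, contradicting Lemma \ref{lem:noci}(2).

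Before Lemma \ref{lem:sccgi431} can be invoked, one must know its hypothesis is met for at least one $H_i$. The parity of $P$ at each $V(H_i)$, each $A_i$, and $u$ (using $d_P(v)\in\{1,3\}$ with $d_P(v) = 3$ only at $v = t$) constrains the possible type-triples $(T_0,T_1,T_2)\in\{0,1,2,4\}^3$ depending on whether $t$ lies in some $V(H_k)$, some $A_k$, or equals $u$. Using the 3-fold symmetry of the windmill, the case analysis reduces to a handful of admissible triples, and in each of them at least one $H_i$ carries $P$ in type $0$ or $1$. The remaining cycle-cover condition of Lemma \ref{lem:sccgi431} is verified by the same kind of cycle tracing used in the proof of Theorem \ref{th:scc431}: a doubly-covered edge near $t$ (or in $\partial H_i$) is covered by two cycles, and one of them must continue through the appropriate $A_j$ to produce the required configuration $\{\{x\},\{y\}\}$ or $\{\{x\},\{x,y\}\}$.

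The main obstacle is the final bookkeeping step: showing that $t_{\mathcal C_i}$ indeed lands on one of $c_0, c_1, c_2$ whenever $\mathcal C_i$ has length $\tfrac43|E(\Ha)|+1$. This is where the specific choice of the bold edge $e$ in Figure \ref{fig:Hagg} is used: $e$ is picked so that $H_i = \Ha - V(e)$ still contains $c_0, c_1, c_2$, and so that the gluing performed in the proof of Lemma \ref{lem:sccgi431} cannot move the new degree-3 vertex out of this set. This part of the argument is routine but tedious, and constitutes the main technical content of the proof beyond the strategy outlined above.
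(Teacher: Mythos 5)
Your setup (contradiction with a cover of length $\tfrac43|E(G)|+1$, the 1-parity subgraph $P$, the vertex $t$, a case analysis on where $t$ sits, and Lemma~\ref{lem:noci} as the final target) matches the paper, but the engine you propose breaks down at two concrete points. First, your parity claim that ``in each admissible case at least one $H_i$ carries $P$ in type $0$ or $1$'' is false: when $t=u$ all three edges $uc_i$ lie in $P$, and $P$ is forced to be of type $2$ on every $H_i$, so Lemma~\ref{lem:sccgi431} (whose hypothesis requires type $0$ or $1$) is inapplicable there. That is, however, exactly the one case the paper settles through Lemma~\ref{lem:noci}(2) --- not by extracting a cover via Lemma~\ref{lem:sccgi431}, but by observing directly that $P$ induces a 1-parity subgraph of some $G_i=\Ha$ whose degree-3 vertex is an endpoint of the bold edge $e$, hence a vertex $c_j$. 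Second, in the cases where Lemma~\ref{lem:sccgi431} does apply, your intended endgame ``force $t_{\mathcal C_i}$ onto one of $c_0,c_1,c_2$'' cannot be carried out: by the ``moreover'' part of that lemma, a length of exactly $\tfrac43|E(\Ha)|+1$ only survives when $t$ lies in $H_i$ or its neighborhood, and then the degree-3 vertex of the extracted cover is essentially $t$ itself, which can be an arbitrary vertex of $H_i$; Lemma~\ref{lem:noci}(2) then yields no contradiction.

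What actually closes these cases in the paper is a separate claim --- if $P$ is of type $0$ or $1$ on $H_i$ then $t$ must lie in $H_i$ --- and its proof descends one level deeper than anything in your plan: it uses that each $G_i=\Ha$ is itself a windmill of three Petersen graphs, applies Observation~\ref{obs:types1} to the induced perfect matching of $G_i$, and exploits the fact that the Petersen graph has no cycle cover of length $20$ (through Lemma~\ref{lem:sccgi43} around the bold edge $e$) to constrain the cycles of $\mathcal C$ until a local contradiction appears. This descent into the Petersen-level structure, together with the precise placement of $e$ in Figure~\ref{fig:Hagg}, is the technical heart of the theorem and is entirely missing from your proposal; relatedly, your appeal to ``cycle tracing as in Theorem~\ref{th:scc431}'' to verify hypotheses (i)/(ii) of Lemma~\ref{lem:sccgi431} is precisely what can fail when $t$ sits near the relevant gadget, which is why the paper needed that claim in the first place. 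So the proposal, as it stands, has a genuine gap rather than merely deferred bookkeeping.
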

\begin{proof}
The graph $G$ is obtained from three copies $G_0,G_1,G_2$ of the graph
$\Ha$ of Figure~\ref{fig:Hagg} by the windmill construction, where
each $H_i$ is obtained from $G_i$ by removing the endpoints of the
edge $e$ (see Figure~\ref{fig:Hagg}).  By Theorem \ref{th:scc431},
$\scc(G)>\frac43 |E(G)|$. Assume for the sake of contradiction that
$G$ has a cycle cover $\cal C$ of length $\tfrac43 |E(G)| + 1 $, and
let $P$ be the 1-parity subgraph associated to $\cal C$ and $t=t_{\cal
  C}$ be the only vertex of degree three in $P$.

We claim that if $P$ is of type 0 or 1 on $H_i$, then $t$ lies in
$H_i$. For the sake of contradiction, assume that $P$ is of type 0 or
1 on $H_i$, while $t$ lies outside of $H_i$ (note that $t$ might be
adjacent to some vertex of $H_i$). Then $P$ induces a perfect matching
of $G_i$. The graph $G_i$ itself was obtained from three copies
$G_0',G_1',G_2'$ of the Petersen graph by the windmill
construction. By Observation~\ref{obs:types1}, $P$ is of type 0 on
some $G_i'$, and type 1 on some $G_j'$. Therefore we can assume,
following the notation of Figure \ref{fig:Hagg}, that $e,e_1 \in P$
and $e_2,e_3,e_4 \not\in P$. Since the Petersen graph has no cycle
cover of length 20, we can assume by Lemma~\ref{lem:sccgi43}, case
(i), that $e_3$ and $e_4$ belong to the same cycle of $\cal C$, and by
Lemma~\ref{lem:sccgi43}, case (ii), that the cycle containing $e_2$
does not contain $e_1$. It can be checked that this is a
contradiction.

\medskip

Now, if $t$ lies inside some $H_i$, remark that $P$
satisfies the conclusion of Observation~\ref{obs:types1} (since the
proof of this observation only considers the intersection of $P$ and
the edges outside the $H_i$'s). In particular, $P$ is of type 0 on
some $H_j$, and type 1 on some $H_k$. This contradicts the claim of
the previous paragraph. Similarly, if $t$ is outside of the $H_i$'s,
but distinct from $u$ (see Figure~\ref{fig:scc}), it can be checked
that $P$ is of type 1 on some $H_i$, which again contradicts the
previous paragraph. It follows that $t$ coincides with $u$ and $P$ is
of type 2 or 4 on each $H_i$. This implies that $P$ is of type 2 on
each $H_i$, so either the two edges $f_1$,$f_2$ or the two edges
$f_3$,$f_4$ belong to $P$. Then $P$ induces a 1-parity subgraph of
$G_1$ (in the first case) or $G_2$ (in the second case), such that the
unique vertex of degree three of this 1-parity subgraph coincides with
a vertex $c_j$ of $G_i$, contradicting Lemma~\ref{lem:noci}.
\end{proof}

\section{Open problems}
H\"agglund proposed the following two problems (Problems 3 and 4 in
\cite{Hag}):
\begin{enumerate}
\item Is it possible to give a simple characterization of cubic graphs
  $G$ with $\chi'_e(G)=5$?
\item Are there any cyclically $5$-edge-connected snarks $G$ with
  excessive index at least five distinct from the Petersen graph?
\end{enumerate}
While the former problem has a negative answer by Theorem
\ref{th:NPcomplete} of the present paper (unless $P=N\! P$), the latter one is still
open, since each element of the infinite family $\mathcal F$ contains
cyclic 4-edge-cuts. The edge-connectivity also plays an important role
in the proof of Theorem~\ref{th:NPcomplete}, in particular the gadgets
we use have many $2$-edge-cuts. Hence we leave open the problem of
establishing whether it is possible to give a simple characterization
of $3$-edge-connected or cyclically $4$-edge-connected cubic graphs
with excessive index 5.

In Section \ref{sec:scc}, we have shown interesting properties of
$\mathcal F$ with respect to cycle covers. In particular, Theorem
\ref{th:scc432} proves the existence of a snark $G\in \cal F$ with no cycle
cover of length less than $\frac 43|E(G)|+2$. We believe that there
exist snarks in $\mathcal F$ for which the constant $2$ can be
replaced by an arbitrarily large number. On the other hand, recall
that Brinkmann, Goedgebeur, H\"agglund, and
Markstr\"om~\cite{BriGoeHAgMar} conjectured that every snark $G$ has a
cycle cover of size at most $(\tfrac{4}{3}+o(1)) |E(G)|$.

\end{document}